\theoremstyle{plain}
\newtheorem{theorem}{Theorem}
\newtheorem{proposition}[theorem]{Proposition}
\newtheorem{remark}[theorem]{Remark}
\numberwithin{theorem}{section}
\numberwithin{equation}{section}
\newcommand{\R}{{\mathbb{R}}}
\newcommand{\rplus}{{\mathbb{R}^{+}}}
\newcommand{\rpluscl}{\overline{\mathbb{R}}^{+}}
\newcommand{\T}{\mathbb{T}}
\newcommand{\clos}[1]{\overline{#1}}
\newcommand{\sbm}[1]{\left[\begin{smallmatrix} #1
             \end{smallmatrix}\right]}
\newcommand{\OmitThis}[1]{}
\newcommand{\ResearchNote}[1]{}
\newcommand{\ProofNote}[1]{}
\newcommand{\closure}[1]{\overline{#1}}
\newcommand{\abs}[1]{\left \vert #1 \right \vert}
\newcommand{\BLO}{\mathcal L}
\newcommand{\norm}[1]{\|{#1}\|}
\newcommand{\rst}[1]{\big{|}_{#1}}
\newcommand{\bg}{\mathbf \gamma}
\newcommand{\bt}{\mathbf t}
\newcommand{\bn}{\mathbf n}
\newcommand{\bb}{\mathbf b}
\newcommand{\bnu}{\mathbf \nu}
\newcommand{\br}{\mathbf r}
\newcommand{\bi}{\mathbf i}
\newcommand{\bj}{\mathbf j}
\newcommand{\bk}{\mathbf k}
\begin{document}

\title{Webster's equation \\ with curvature and dissipation}
\author{Teemu Lukkari and Jarmo Malinen}

\bibliographystyle{plain}

\maketitle

\begin{abstract}
  Wave propagation in curved tubular domains is considered. \linebreak
  A general version of Webster's equation is derived from the
  scattering passive wave equation. More precisely, it is shown that
  planar averages of a sufficiently smooth solution of the wave
  equation satisfy the corresponding Webster's equation when the
  latter includes additional control signals determined by the
  solution.
\end{abstract}

{\bf Keywords.} Wave propagation, tubular domain, Webster's model, 

vocal tract.

{\bf AMS classification.} Primary 35L05, secondary 35L20, 93C20, 47N70.


\section{Introduction}

We study wave propagation in a narrow but long, tubular domain $\Omega
\subset \R^3$ of finite length whose cross-sections are circular and
of varying area.  In this case, the wave equation in $\Omega$, the
topmost equation in \eqref{IntroWaveEq} below, has a classical
approximation depending on a single spatial variable in the long
direction of $\Omega$. The approximation is known as \emph{Webster's
  equation}, which is the topmost equation in \eqref{IntroWebstersEq}
below. The geometry of $\Omega$ is represented by the \emph{area
  function} $A(\cdot)$ whose values are cross-sectional areas of
$\Omega$.  

We represent a tubular domain $\Omega \subset \R^3$ as follows.  The
center line of the tube can be thought of as a smooth curve $\bg$ (of
unit length) parametrised by arc length $s$. We assume that the
cross-section of $\Omega$, perpendicular to the tangent of $\bg$ at
the point $\bg(s)$, is the circular disk $\Gamma(s)$ with the radius
$R(s)$. The boundary of $\Omega$ then consists of the \emph{ends} of
the tube, $\Gamma(0)$ and $\Gamma(1)$, and the \emph{wall} $\Gamma$ of
the tube. With this notation, lossless acoustic wave propagation in
$\Omega$ can be modelled by the wave equation, written for the
\emph{(perturbation) velocity potential} $\phi: \Omega \times \rpluscl 
\to \R$
\begin{equation}\label{IntroWaveEq}
\begin{cases}
  &  \phi_{tt}(\br, t) = c^2 \Delta \phi(\br, t) \quad
  \text{ for } \br \in \Omega \text{ and } t \in \rplus, \\
  &   c \frac{\partial \phi}{\partial
    \bnu}(\br, t) + \phi_t(\br, t) = 2 \sqrt{\tfrac{c}{\rho A(0)}} \, u(\br, t) \quad
  \text{ for } \br \in \Gamma(0) \text{ and }  t \in \rplus, \\
  &  c \frac{\partial \phi}{\partial
    \bnu} (\br, t) - \phi_t(\br, t)  =  2 \sqrt{\tfrac{c}{\rho A(0)}} \, y(\br, t)  \quad
  \text{ for }  \br \in \Gamma(0) \text{ and }  t \in \rplus, \\
  &  \phi(\br, t)  = 0 \quad \text{ for } \br \in \Gamma(1) \text{ and }
  t \in \rplus, \\
  &  \frac{\partial \phi}{\partial \bnu}(\br, t)  = 0 \quad \text{ for } \br
  \in \Gamma, \text{ and }
  t \in \rplus 
\end{cases}
\end{equation}
where $\rplus = (0,\infty)$, $\rpluscl = [0,\infty)$, and $\bnu$
  denotes the unit exterior normal vector field on $\Gamma$. The sound
  speed in and the density of the medium are denoted by $c$ and
  $\rho$, respectively.  The Dirichlet condition on $\Gamma(1)$
  represents an open end, and the Neumann condition on $\Gamma$
  represents a hard reflective surface.  The control (i.e., the input)
  $u(\br, t)$ and the observation (i.e., the output) $y(\br, t)$ are
  given in \emph{scattering form} in \eqref{IntroWaveEq} where the
  physical dimension of both signals is power per unit area.\footnote{
    Another typical way of writing the control and observation for
    \eqref{IntroWaveEq} is by using the impedance/admittance boundary
    conditions $\frac{\partial \phi}{\partial \bnu} (\br, t) = v(\br,
    t)$ and $\rho \phi_t(\br, t) = p(\br, t)$ on the active part
    $\Gamma(0)$ of the boundary $\partial \Omega$ where $v$ and $p$
    are the (perturbation) velocity and (acoustic) pressure,
    respectively.  The interplay between impedance and scattering
    forms is discussed in a general setting in, e.g.,
    \cite{M-S-W:HTCCS,M-S:CBCS,M-S:IPCBCS}.}

The corresponding lossless Webster's equation is given by
\begin{equation} \label{IntroWebstersEq}
  \begin{cases}
    & \psi_{tt} = \frac{c(s)^{2}}{A(s)} \frac{\partial}{\partial s}
    \left ( A(s) \frac{\partial \psi }{\partial s} \right ) 
   \hfill \text{ for }   s \in (0,1) \text{ and } t \in \rplus, \\
    & - c \psi_s(0,t) + \psi_t(0,t) = 2 \sqrt{\frac{c}{\rho
        A(0)}} \, \tilde u(t) \quad
    \text{ for } t \in \rplus, \\
    & - c \psi_s(0,t) - \psi_t(0,t) = 2 \sqrt{\frac{c}{\rho
        A(0)}} \, \tilde y(t) \quad
    \text{ for } t \in \rplus, \\
    & \psi(1,t) = 0 \quad  \text{ for } t \in \rplus
\end{cases}
\end{equation}
where $A(s)$ is the area of the cross-section $\Gamma(s)$, and the
solution $\psi:[0,1]\times \rpluscl \to \R$ is \emph{Webster's
  velocity potential}.  Obviously, Webster's equation
\eqref{IntroWebstersEq} is a computationally and mathematically
simpler model than \eqref{IntroWaveEq} for longitudinal wave
propagation for wavelengths that are long compared to $R(s)$.  More
precisely, the solution $\psi$ to \eqref{IntroWebstersEq} is expected
to approximate the averages
\begin{equation} \label{AveragedWaveEqSol}
 \bar \phi(s,t) : = \frac{1}{A(s)} {\int_{ \Gamma(s) }{\phi d A}} \quad \text{ for } \quad 
s \in (0,1) \quad \text{ and }  \quad  t \in \rpluscl 
\end{equation}
of the velocity potential $\phi$ given by \eqref{IntroWaveEq}. 

\begin{figure}
  \centering  
  \includegraphics[height=8cm]{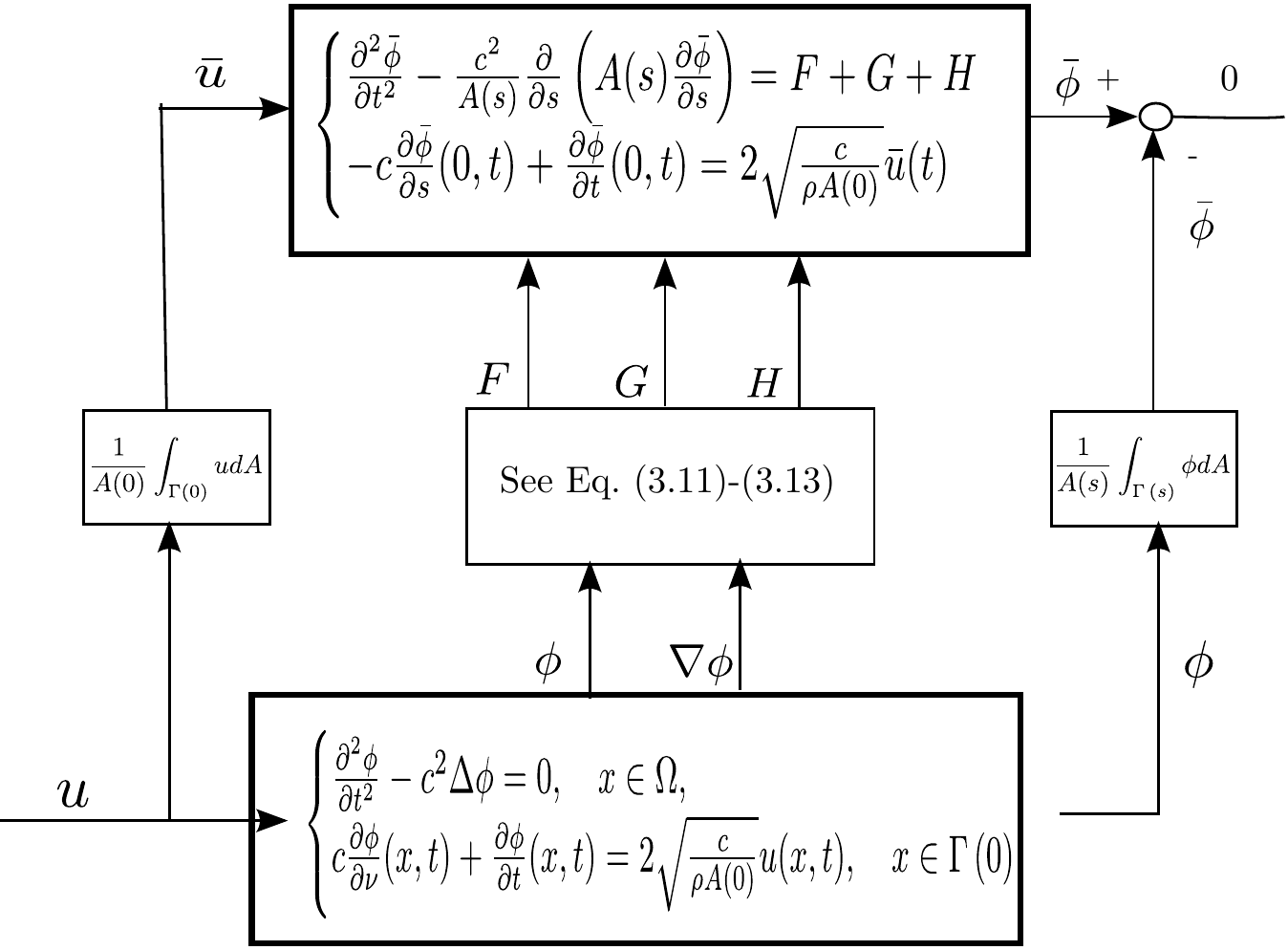}
  \caption{Feedforward coupling describing the main results of this
    article. The equations in the blocks are as they appear in the
    lossless case and without curvature.  The functions $\bar \phi$
    and $\phi$ are related by \eqref{AveragedWaveEqSol}.}
  \label{Fig1}
\end{figure}

The purpose of this article is to derive Webster's equation in a
general and mathematically rigorous way.  The ultimate goal of this
article and its companion articles \cite{A-L-M:AWGIDDS,L-M:PEEWEWP} is
to write an estimate for difference between the solution $\psi$ of
Webster's model \eqref{IntroWebstersEq} and the planar averages $\bar
\phi$ of $\phi$; we call this difference $\psi - \bar \phi$ the
\emph{tracking error}.  All this can be summarised with the aid of
Fig.~\ref{Fig1} and Fig.~\ref{Fig2} as follows:
\begin{enumerate}
\item By Theorem~\ref{MainTheorem1} and the solvability of the
  scattering wave equation model on $\Omega$ by \cite[Theorem~5.1 and
    Corollary~5.2]{A-L-M:AWGIDDS}, all the signals in Fig.~\ref{Fig1}
  are well-defined functions. The average $\bar\phi$ satisfies
  nonhomogeneous Webster's equation \eqref{WebsterEq1} with the load
  term $F + G + H$ given by \eqref{ControlTermsF} --
  \eqref{ControlTermsH}. Thus, the tracking error vanishes in
  Fig.~\ref{Fig1} because this additional load term has been included.
\item 
  In the absense of the load term $F + G + H$ in the true, homogenous
  Webster's equation shown in Fig.~\ref{Fig2}, the tracking error can still be controlled by
  $F$, $G$, and $H$.  This is carried out in \cite{L-M:PEEWEWP} by
  refining the passivity argument of \cite[Section~4]{A-L-M:AWGIDDS}
  to include the required non-boundary control term in the energy
  estimates for Webster's model.
\item The load terms $F$, $G$, and $H$, and hence, the tracking error
  can be \emph{a posteriori} estimated by $\phi$ as shown in
  \cite{L-M:PEEWEWP}.
\end{enumerate}
We conclude that the solution of the homogenous Webster's equation
$\psi$ in \eqref{IntroWebstersEq} approximates well the planar
averages $\bar \phi$ in \eqref{AveragedWaveEqSol} for solutions $\phi$
of \eqref{IntroWebstersEq} for which $F + G + H$ is insignificant, and
the longitudinal wave propagation in $\Omega$ dominates the
transversal modes.

\begin{figure}
  \centering  
  \includegraphics[height=8cm]{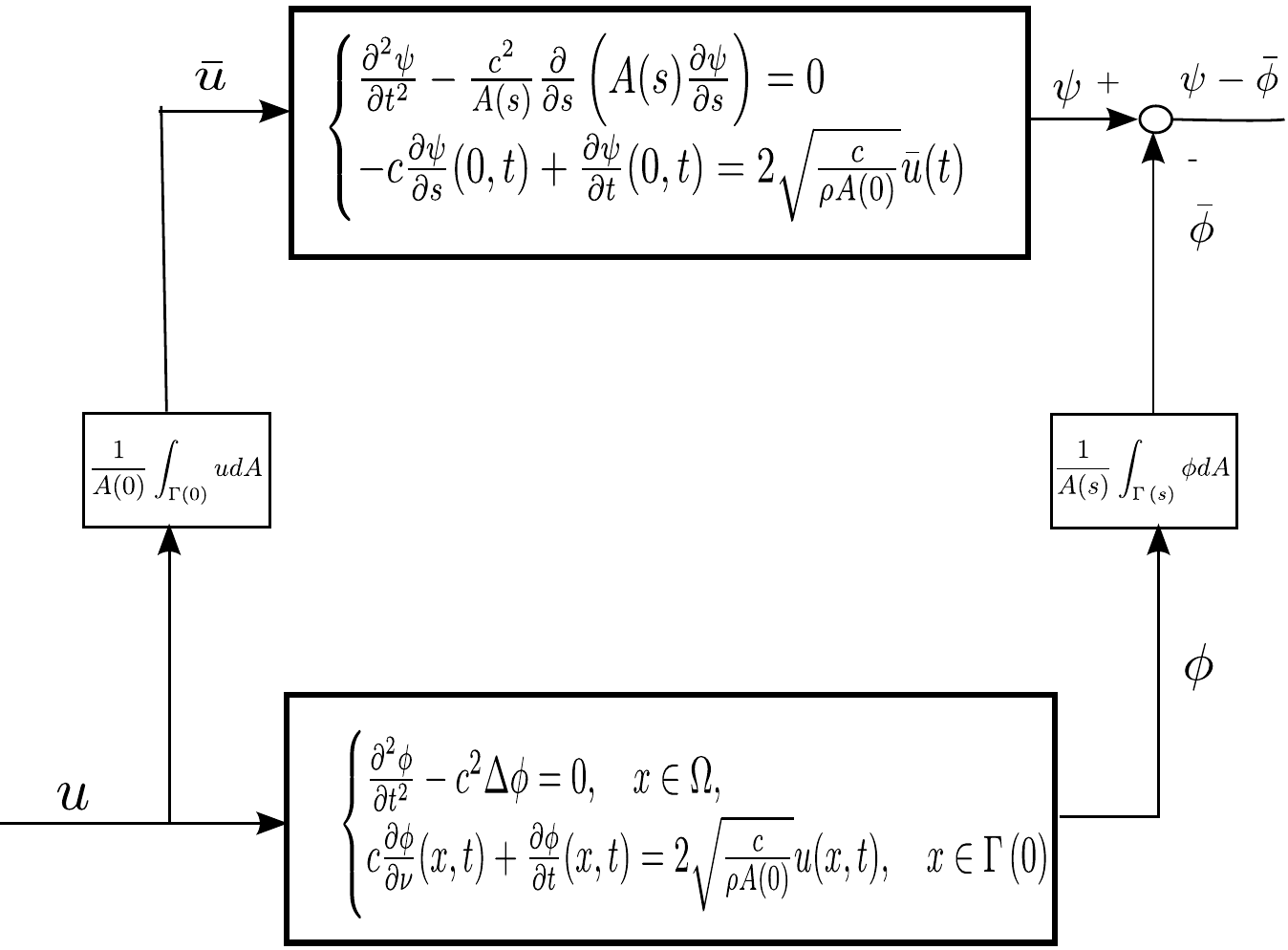}
  \caption{Feedforward coupling describing the tracking error $\psi -
    \bar \phi$. }
  \label{Fig2}
\end{figure}

All these results are shown in the context of a more general wave
equation model \eqref{WaveEq}---\eqref{WaveEqObs} than the lossless
models given by \eqref{IntroWaveEq} which leads to
\eqref{IntroWebstersEq} in the case of straight tubular domains
$\Omega$. Boundary dissipation is allowed through the lateral wall
$\Gamma$ by the boundary condition \eqref{DampingBndry}, and this
leads to a dissipation term in Webster's equation
\eqref{WebsterHorn}. The curvature of $\Omega$ is taken into account
in Webster's equation by introducing a correction factor to the speed
of sound; see \eqref{VariableSoundSpeedDef} and also
\cite[Section~6]{SR:WHER}.  The full results require that the torsion
of the center line $\bg$ of $\Omega$ vanishes, and that $A'(0) =
\kappa(0) = 0$ where $\kappa$ is the curvature of $\bg$.

Our interest in Webster's equation stems from the fact that it
provides a simple and tractable model of the acoustics of the human
vocal tract as it appears during a vowel utterance. This equation (as
well as the wave equation) can be used as a part of a computational
physics model; see, e.g.,
\cite{A-A-M-M-V:MLBVFVTOPI,A-H-K-M-P-S-V:HFAVFFCVTR,
  C-K:TVINAS,GF:ATSP,H-L-M-P:WFWE} and the references therein. The
other parts are a mechanical model for the glottis and an exterior
space model outside the mouth; see
\cite{A-A:LOGMNFMCAL,A-A-M-M-V:MLBVFVTOPI}. Following these ideas and
motivations, a boundary control action is used to represent the
acoustic field produced by the glottis source.  Further applications
of Webster's equation include modelling of water waves in tapered
channels, acoustic design of exhaust pipes and jet engines for
controlling noise, vibration, and performance as well as construction
of instruments such as loudspeakers and horns
\cite[p.~402--405]{H-S:FDHL}.

Early work concerning Webster's equation can be found in
\cite{EE:CSWHE,VS:GPWHT,VS:NFH,AW:AITHP}. Webster's original work
\cite{AW:AITHP} was published in 1919, but the model itself has a
longer history spanning over 200 years and starting from the works of
D.~Bernoulli, Euler, and Lagrange. A selection of more modern
approaches is provided by
\cite{L-L:AMAEMAI,L-L:AMAEMAII,N-T:APDVCS,SR:STSVCALDF,SR:WHER,R-E:NCBMSFESSPLFD}. Moreover,
there exist a wide and relevant PDE-related literature on the damped
wave equation in 1D spatial domain where questions such as
distribution of resonance values, eigenfunction expansions,
exponential stabilisation, and energy decay have been treated: see,
e.g., \cite{G-H:DSPE} which contains a multitude of references in
various directions.

Shared features of the modern treatments of Webster's model (such as
given in \cite[Section~8.1]{R-H:IA}) are the time-harmonic Ansatz and
asymptotic expansions for the velocity potential function $\phi$
satisfying the wave equation (either with or without an underlying
advection). The time-harmonic Ansatz leads to the corresponding
time-invariant form of the model, analogous to the Helmholtz
equation. The resonance structure (i.e., the formants) of the model is
obtained from the resulting eigenvalue problem; see, e.g., asymptotic
spectra of Neumann--Laplacian on shrinking tubular domains in
\cite{KZ:CSMSCG,RS:STRIP}.  Our complementary approach is to look
directly for equations that are satisfied by planar averages $\bar
\phi$ of $\phi$ (see \eqref{AveragedWaveEqSol} below) which is likely
how Webster's equation was first discovered.  This results in
\eqref{WebsterEq1} which is satisfied by the averages $\bar \phi$.  If
the three load terms given by
\eqref{ControlTermsF}--\eqref{ControlTermsH} are neglected from
\eqref{WebsterEq1}, Webster's equation is obtained in a generalised
form.
  
The article is organised as follows: In Section~\ref{BackgroundSec},
we discuss the regularity of the solution $\phi$ of the wave equation
as well as details of the coordinate system for the tubular geometry
described above.  The main result, Theorem \ref{MainTheorem1} is given
without proof in Section \ref{ResultSec}. In Section \ref{WebsterSec},
we present the derivation of Webster's equation with curvature and
dissipation. Also the boundary conditions for Webster's equation are
derived from those of the wave equation. Section \ref{ProofSection}
contains the rigorous proof of Theorem \ref{MainTheorem1}.


\section{\label{BackgroundSec} Background}

\subsubsection*{Regularity and solvability of the wave equation}

In order to carry out the arguments leading to Webster's model for
approximating the planar averages $\bar \phi$, we need have some
assumptions on the regularity of $\phi$ that solves the original wave
equation model. Throughout this article, we require that the solution
$\phi$ of the wave equation has the following regularity
properties\footnote{Even if we write the spatial variables before the
  time variable in functions such as $\phi = \phi(\br,t)$, we write
  the time variable first for the function spaces such as
  $C^1(\rpluscl; H^1(\Omega))$.}:
\begin{equation} \label{WaveEqRegularityAssumptions}
  \begin{aligned} 
    \phi & \in C^1(\rpluscl; H^1(\Omega)) \cap C^2(\rpluscl; L^2(\Omega)), 
    \quad  \frac{\partial \phi}{\partial \nu} \in C(\rpluscl;L^2(\partial \Omega)) \\
    \nabla  \phi & \in C^1(\rpluscl;L^2(\Omega;\R^3)), \quad  \text{ and }   \quad 
    \Delta \phi   \in C(\rpluscl;L^2(\Omega)).
  \end{aligned}  
\end{equation}
We remark that even the method of asymptotic expansions requires a
nontrivial implicit regularity assumption so that the expansion
exists; see \cite[p.~1985]{SR:WHER}.

It is not unreasonable to require \eqref{WaveEqRegularityAssumptions}
from the unique solution of the wave propagation model appearing the
wave equation part in Fig.~\ref{Fig1} and given by
\begin{equation}\label{WaveEq}
\begin{cases}
  & \phi_{tt}(\br, t) = c^2 \Delta \phi(\br, t) \quad \text{ for } \br
  \in \Omega \text{ and } t \in \rplus, \\ & c \frac{\partial
    \phi}{\partial \bnu}(\br, t) + \phi_t(\br, t) = 2
  \sqrt{\tfrac{c}{\rho A(0)}} \, u(\br, t) \quad \text{ for } \br \in
  \Gamma(0) \text{ and } t \in \rplus, \\ & \phi(\br, t) = 0 \quad
  \text{ for } \br \in \Gamma(1) \text{ and } t \in \rplus, \\ &
  \frac{\partial \phi}{\partial \bnu}(\br, t) + \alpha \phi_t(\br, t)
  = 0 \quad \text{ for } \br \in \Gamma, \text{ and } t \in \rplus,
  \text{ and } \\ & \phi(\br,0) = \phi_0(\br), \quad \rho
  \phi_t(\br,0) = p_0(\br) \quad \text{ for } \br \in \Omega,
\end{cases}
\end{equation}
together with the observation $y$ defined by
\begin{equation}\label{WaveEqObs}
    c \frac{\partial \phi}{\partial
    \bnu} (\br, t) - \phi_t(\br, t)  =  2 \sqrt{\tfrac{c}{\rho A(0)}} \, y(\br, t)  \quad
  \text{ for }  \br \in \Gamma(0) \text{ and }  t \in \rplus.
\end{equation}
Indeed, equations \eqref{WaveEq}---\eqref{WaveEqObs} define an
internally well-posed, dissipative dynamical system for $\alpha \geq
0$ as shown in \cite[Theorem~5.1(i)\&(iii)]{A-L-M:AWGIDDS} which is,
in addition, conservative if $\alpha = 0$ by
\cite[Corollary~5.2]{A-L-M:AWGIDDS}.  Such a system has a unique
solution $\phi$ for twice differentiable input signals and compatible
initial conditions $\sbm{\phi_0 \\ p_0}$ at $t = 0$:
\begin{proposition}
\label{PieniPorpositioRatkeavuudesta} 
  Let $u \in C^2(\rpluscl; L^2(\Gamma(0)))$ and $\phi_0, p_0 \in
  H^1(\Omega)$ satisfy
\begin{equation} \label{PieniPorpositioRatkeavuudestaKaava1}
\begin{aligned} 
  & \Delta \phi_0 \in L^2(\Omega), \quad \phi_0\rst{\Gamma(1)} =
  p_0\rst{\Gamma(1)} = 0, \quad \frac{\partial \phi_0}{\partial
    \nu}\rst{\Gamma(0) \cup \Gamma} \in L^2(\Gamma(0) \cup \Gamma),
  \\ & \rho c \frac{\partial \phi_0}{\partial \nu}\rst{\Gamma(0)} +
  p_0 \rst{\Gamma(0)} = \sqrt{\frac{\rho c}{A(0)}} u(\cdot,0) \text{
    and } \rho \frac{\partial \phi_0}{\partial \nu}\rst{\Gamma} +
  \alpha p_0 \rst{\Gamma} = 0.
\end{aligned}
\end{equation}
  Then there is a unique solution $\phi$ in $C^1(\rpluscl;
  H^1(\Omega))$ of the wave equation model \eqref{WaveEq} satisfying
  the regularity conditions \eqref{WaveEqRegularityAssumptions}.  The
  output, given by \eqref{WaveEqObs}, satisfies $y \in
  C(\rpluscl;L^2(\Gamma(0)))$.
\end{proposition}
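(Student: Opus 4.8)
\emph{Proof strategy.} The plan is to read the wave equation model \eqref{WaveEq}---\eqref{WaveEqObs} as an abstract linear system in operator-node form and then apply the standard regularity theory for its classical solutions, the underlying well-posedness being provided by \cite{A-L-M:AWGIDDS}. Following \cite[Section~5]{A-L-M:AWGIDDS}, I would work in the energy (state) space $\Xscr$ in which the state vector $\xxy{\phi(\cdot,t)}{\rho\,\phi_t(\cdot,t)}$ evolves --- so that $\Xscr$ is, up to equivalence of norms, $\{\,\phi \in H^1(\Omega) : \phi\rst{\Gamma(1)} = 0\,\} \times L^2(\Omega)$ --- take $L^2(\Gamma(0))$ as both the input and the output space, and use the system node $\SmallSysNode$ that simultaneously encodes the interior equation $\phi_{tt} = c^2\Delta\phi$, the damped Neumann condition on $\Gamma$, the Dirichlet condition on $\Gamma(1)$, and the scattering control on $\Gamma(0)$. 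By \cite[Theorem~5.1(i)\&(iii)]{A-L-M:AWGIDDS}, together with \cite[Corollary~5.2]{A-L-M:AWGIDDS} in the conservative case $\alpha = 0$, the node $\SmallSysNode$ generates an internally well-posed dissipative system; in particular its main operator generates a contraction semigroup on $\Xscr$ and its control operator is admissible.

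Next I would check that the compatibility hypotheses in \eqref{PieniPorpositioRatkeavuudestaKaava1} amount exactly to the requirement $\xxxy{\phi_0}{p_0}{u(\cdot,0)} \in \Dom{\SmallSysNode}$. The conditions $\Delta\phi_0 \in L^2(\Omega)$, $\phi_0\rst{\Gamma(1)} = p_0\rst{\Gamma(1)} = 0$ and $\frac{\partial\phi_0}{\partial\nu}\rst{\Gamma(0)\cup\Gamma} \in L^2(\Gamma(0)\cup\Gamma)$ place $\xxy{\phi_0}{p_0}$ in the regular subspace underlying the domain, while the two boundary identities relating $\frac{\partial\phi_0}{\partial\nu}$, $p_0$ and $u(\cdot,0)$ on $\Gamma(0)$ and on $\Gamma$ are the $t=0$ instances of the boundary conditions in \eqref{WaveEq}; together these cut out $\Dom{\SmallSysNode}$.

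With $u \in C^2(\rpluscl;L^2(\Gamma(0)))$ and the initial data thus lying in $\Dom{\SmallSysNode}$, I would invoke the general existence and uniqueness theorem for classical solutions of operator nodes (the node analogue of the Hille--Yosida generation theorem, in the form used in \cite{A-L-M:AWGIDDS} and already alluded to in the text preceding this proposition): there is a unique $x \in C^1(\rpluscl;\Xscr)$ with $\xxy{x(t)}{u(t)} \in \Dom{\SmallSysNode}$ for every $t \in \rpluscl$, solving the state equation $\dot x(t) = [\AB]\xxy{x(t)}{u(t)}$ built into $\SmallSysNode$, with associated output $y(t) = [\CD]\xxy{x(t)}{u(t)}$ depending continuously on $t$, so that $y \in C(\rpluscl;L^2(\Gamma(0)))$. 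Unwinding this back into \eqref{WaveEqRegularityAssumptions}: from $x \in C^1(\rpluscl;\Xscr)$ we get $\phi \in C^1(\rpluscl;H^1(\Omega))$, hence $\nabla\phi \in C^1(\rpluscl;L^2(\Omega;\R^3))$, and $\rho\,\phi_t \in C^1(\rpluscl;L^2(\Omega))$, i.e. $\phi \in C^2(\rpluscl;L^2(\Omega))$; membership of $\xxy{x(t)}{u(t)}$ in $\Dom{\SmallSysNode}$ together with $\phi_{tt} = c^2\Delta\phi$ forces $\Delta\phi \in C(\rpluscl;L^2(\Omega))$, and the trace component of the domain gives $\frac{\partial\phi}{\partial\nu} \in C(\rpluscl;L^2(\partial\Omega))$.

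The step I expect to be the main obstacle is the precise identification of the concrete compatibility conditions with membership in $\Dom{\SmallSysNode}$ and, hand in hand with it, making sure that the normal-trace regularity $\frac{\partial\phi}{\partial\nu} \in C(\rpluscl;L^2(\partial\Omega))$ genuinely holds in this strong sense rather than merely in a weaker variational one; under mixed Dirichlet--Neumann data on $\partial\Omega$, interior elliptic regularity alone does not yield $L^2$ normal traces, so here one has to lean on the concrete construction of $\Dom{\SmallSysNode}$ in \cite{A-L-M:AWGIDDS}. A secondary and more routine point is to verify that the assumed smoothness $u \in C^2(\rpluscl;L^2(\Gamma(0)))$ is more than enough to produce a $C^1$ state trajectory for this node.
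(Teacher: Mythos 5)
Your proposal is correct and matches the paper's approach: the paper proves this proposition simply by declaring it ``a reformulation of [Theorem~5.1(ii) and Corollary~5.2]'' of the companion article \cite{A-L-M:AWGIDDS}, and your system-node outline --- identifying the compatibility conditions \eqref{PieniPorpositioRatkeavuudestaKaava1} with membership in the domain of the node and invoking the classical-solution theorem for $C^2$ inputs --- is exactly the argument that citation carries out. The one caveat you flag (obtaining the $L^2$ normal trace from the concrete construction of the domain rather than from interior elliptic regularity) is indeed where the cited reference does the real work.
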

\noindent This is a reformulation of \cite[Theorem~5.1(ii) and
  Corollary~5.2]{A-L-M:AWGIDDS}. The analogous statements hold for
Webster's equation by \cite[Theorem~4.1]{A-L-M:AWGIDDS} but this will
not be needed in this article. It is remarkable that the current
approach does not leave any ``regularity gap'' in the sense that all
Sobolev smoothness allowed by \eqref{WaveEqRegularityAssumptions} is
actually used in the proof of Proposition~\ref{DirichletTraceProp}
below.


\subsubsection*{Global coordinates in a curved  tube}

We give next the necessary facts about the coordinate system we use to
represent tubular domains as shown in Fig.~\ref{centrelines}; see also
\cite[Section~2.3]{SR:WHER}. Detailed arguments are postponed to the 
appendices at the end of the article.

 \begin{figure}
 \centering
 \def\svgwidth{0.45\textwidth}
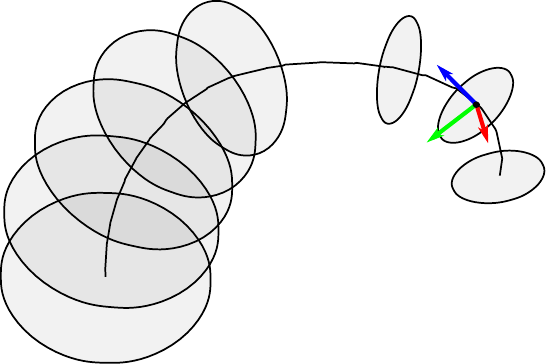 
 \caption{The Frenet frame of the planar centreline for a tubular
   domain $\Omega$, represented by some of its intersection surfaces
   $\Gamma(s)$ for $s \in [0,1]$. The wall $\Gamma \subset \partial \Omega$ is not
   shown. }\label{centrelines}
 \end{figure}

Let $\bg : [0,1] \to \R^3$ be a smooth curve parameterised by arc
length $s$.  Define for $s \in [0,1]$ the unit vectors
\begin{equation*}
  \bt (s) := \bg'(s), \quad \bn (s) := \frac{\bt'(s)}{\kappa(s)}, \quad
\text{ and } \quad \bb (s) := \bt (s) \times \bn (s).
\end{equation*}
Here the curvature of $\bg$ is defined by $\kappa(s) :=
\norm{\bg''(s)}$ for $s \in [0,1]$, and we assume that $\min_{s \in
  [0,1]}{\kappa(s)} > 0$.  It then follows that
\begin{equation} \label{FrenetDerivatives}
\bt' = \kappa \bn , \quad \bn' = - \kappa \bt  +  \tau \bb,  
\quad  \text{ and }  \quad  \bb' = - \tau \bn
\end{equation}
where the number $\tau = \tau(s)$ for $s \in [0,1]$ is called the
torsion of $\bg$.  All these facts concerning the Fren\'et coordinates
can be found in any standard reference on elementary differential
geometry, e.g., \cite{AP:EDG}.

By $\bi$, $\bj$, and $\bk$ denote the unit vectors in the direction of
cartesian coordinate axes. For any point $\br = x \bi + y \bj + z \bk
\in \R^3$ in a neighbourhood of $\bg$ we have
\begin{equation} \label{CoordinateIdentity}
  x \bi + y \bj + z \bk = \bg(s) + r \cos{\theta} \, \bn (s) + r \sin{\theta} \, \bb (s),
\end{equation}
and hence we can use the triple $(s,r,\theta) \in [0,1] \times
[0,\infty] \times [0, 2 \pi)$ as coordinates for $\br$. We shall call
these coordinates $(s, r, \theta)$ the \emph{tube coordinates}.

Using these coordinates, we can define the (open) interior of any tube
as
\begin{equation} \label{OmegaDef}
  \Omega := \{(s, r, \theta) : s\in (0,1), r \in [0, R(s)), \theta \in 
  [0,2 \pi) \} \subset \R^3
\end{equation}
where the differentiable function $R:[0,1] \to (0,\infty)$ gives the
radius of the cross-section at $s \in [0,1]$.  We define
\begin{equation} \label{GammaDef}
  \begin{cases}
    \Gamma & := \{(s, R(s), \theta) : s\in (0,1), \theta \in [0,2 \pi) \} \quad \text{ and } \\
    \Gamma(s) & :=\{(s, r, \theta) : r \in [0,R_0), \theta \in [0,2 \pi)
  \} \quad \text{ for } \quad s \in [0,1]. 
\end{cases}
\end{equation}
As before, we denote by $A(s)$ the area of $\Gamma(s)$ for $s \in
[0,1]$.  We call $\Gamma$ the \emph{wall}, and the circular plates
$\Gamma(0)$, $\Gamma(1)$ are the \emph{ends} of the tube $\Omega$. The
boundary of $\Omega$ satisfies $\partial \Omega = \closure{\Gamma}
\cup \Gamma(0) \cup \Gamma(1)$.  We make it as a standing assumption
that
\begin{equation} \label{NoFolding}
  \eta(s) := R(s) \kappa(s) < 1 \quad \text{ for all } \quad s \in [0,1]
\end{equation}
in which case the tube does not fold onto itself, and the coordinate
mapping associated to \eqref{CoordinateIdentity} is a diffeomorphism.
The number $\eta(s)$ is called \emph{curvature ratio} at $s \in
[0,1]$, and typically both $\eta(s) \ll 1$ and $\eta'(s) \ll 1$.
The \emph{curvature factor} is defined by
\begin{equation}\label{CurvatureFactor}
\Xi = \Xi(s,r,\theta) :=  \frac{1}{1 - r \kappa(s) \cos{\theta}}.
\end{equation}
Clearly $\Xi = 1 + r \kappa \cos{\theta} + \mathcal O(\eta^2)$ as
$\eta \to 0$ since $\abs{\Xi - 1 - r \kappa \cos{\theta}} \leq
\eta^2(1 - \eta)^{-1}$.

For the derivation of Webster's model, we need the following facts
about the coordinate system given by \eqref{CoordinateIdentity} that
can be verified by elementary vector calculus.

\begin{enumerate}
\item The volume differential in tube coordinates is
\begin{equation} \label{VolumeDifferential}
 d V = dx\, dy \, dz = \frac{r }{\Xi}  \, ds \, dr \, d\theta.
\end{equation}

\item The surface area element on the tube wall $\Gamma$ is 
  \begin{equation} \label{SurfaceAreaElement}
  dS= W \, d\theta \, ds  \quad \text{ where } 
  \quad W(s) :=  R(s)\sqrt{R'(s)^2+(\eta(s) - 1)^2}
\end{equation}

\item The gradient $\nabla$ can be written as 
\begin{equation}
  \label{NablaInTubeEq}
 \nabla = \bt(s) D_1 + \bn(s) D_2 + \bb(s) D_3 
\end{equation}
where the differential operators $D_i$, $i=1,2,3$, are given by
\begin{equation} \label{DiffOps}
  D_1 := \Xi \frac{\partial}{\partial s}, \quad 
   D_2 := \cos{\theta} \frac{\partial}{\partial r} -
  \frac{\sin{\theta}}{r}\frac{\partial}{\partial \theta}, \quad D_3 :=
  \sin{\theta} \frac{\partial}{\partial r} +
  \frac{\cos{\theta}}{r}\frac{\partial}{\partial \theta}. 
\end{equation}

\item Finally, the exterior normal derivative on $\Gamma$ is given by
  \begin{equation} \label{ExtNormalOnGamma}
    \bnu \cdot \nabla 
    = \frac{R}{W} \left (- R' \Xi \frac{\partial}{\partial s}
         + (1 - \kappa
      R)\frac{\partial}{\partial r} \right ).
  \end{equation}

\end{enumerate}

\section{Approximation of the wave equation \label{ResultSec}}

In this section we present how the solutions of the wave equation 
\begin{equation} \label{OnlyWaveEq}
 \frac{\partial^2 \phi}{\partial t^2}
= c^2 \Delta \phi \quad \text{ on the domain } \quad  \Omega \times \rplus 
\end{equation}
are related to a generalised Webster's equation in the domain $\Omega$
that has been described in \eqref{OmegaDef}.  We assume that the tube
wall $\Gamma$, defined by \eqref{GammaDef}, allows energy dissipation
through it, and the energy loss is modelled by
\begin{equation} \label{DampingBndry}
  \alpha \frac{\partial \phi}{\partial t} +\frac{\partial \phi }{\partial \bnu} = 0 
  \quad \text{ on } \quad  \Gamma  \times  \rplus \quad  \text{ where }  \quad \alpha \geq  0.
\end{equation}
Note that the Neumann boundary condition $\frac{\partial \phi
}{\partial \bnu} = 0$, i.e., $\alpha = 0$ in \eqref{DampingBndry},
describes a perfectly reflecting boundary leading to a lossless model.
Another interesting choice is $\alpha = 1/c$ which corresponds to a
perfectly absorbing boundary.  For the purpose of reprensenting the
curvature of the tube, we define the \emph{sound speed correction
  factor} by
\begin{equation} \label{SoundSpeedCorrectionFactor}
  \Sigma(s) := \left ( 1 + \tfrac{1}{4} \eta^2(s) \right )^{-1/2}
  \quad \text { where } \quad \eta(s) 
  \quad \text{ is given by \eqref{NoFolding}} . 
\end{equation} 
As shown in Sections \ref{WebsterSec} and \ref{ProofSection}, Webster's
equation is then given by
\begin{equation} \label{WebsterHorn}
  \frac{1}{A(s)} \frac{\partial}{\partial s} \left ( A(s)
    \frac{\partial \psi }{\partial s} \right ) - \frac{2 \pi
    \alpha W(s)}{A(s)} \frac{\partial \psi }{\partial t} -
  \frac{1}{c^2 \Sigma(s)^{2}} \frac{\partial^2 \psi}{ \partial
    t^2} = 0.
\end{equation}
Note that the boundary dissipation \eqref{DampingBndry} results in the
second term in \eqref{WebsterHorn}.

On the end surface $\Gamma(1)$ of tube $\Omega$ we use the Dirichlet
boundary condition
\begin{equation} \label{DirichletBndry}
   \phi = 0 \quad \text{ on } \quad  \Gamma(1) \times \rplus 
\end{equation}
that represents an acoustic open end. The same argument that produces
\eqref{WebsterHorn} from \eqref{OnlyWaveEq} -- \eqref{DampingBndry}
also produces the boundary condition $\psi(1,t) = 0$ from
\eqref{DirichletBndry}.

Recall that the wave equation \eqref{OnlyWaveEq} is controlled and
observed from the boundary component $\Gamma(0)$ representing one end
of the tube $\Omega$.  In this work, we use the scattering boundary
conditions given already in \eqref{WaveEq}--\eqref{WaveEqObs}:
\begin{equation} \label{WaveEquationInputBCResultSec}
\begin{aligned} 
  & c \frac{\partial \phi}{\partial \nu}(\br, t) + \frac{\partial \phi}{\partial t}(\br, t) =
  2 \sqrt{\tfrac{c}{\rho A(0)}}
  \, u(\br, t) \quad \text{ and } \\
  & c \frac{\partial \phi}{\partial \nu}(\br, t) - \frac{\partial \phi}{\partial t}(\br, t) =
  2 \sqrt{\tfrac{c}{\rho A(0)}} \, y(\br, t) 
\end{aligned}
\end{equation}
The treatment of these boundary conditions is somewhat more
complicated than that of \eqref{DirichletBndry}, and we make two
additional assumptions on $\Omega$ to ensure that the surface
$\Gamma(0)$ is ``geometrically reflection-free'':
\begin{equation} \label{ReflectFreeBoundaryAss}
  A'(0) = 0 \quad \text{ and } \quad \kappa(0) = 0.
\end{equation}
As is shown in Sections \ref{WebsterSec} and \ref{ProofSection}, then
\eqref{WaveEquationInputBCResultSec} imply the scattering boundary
conditions
\begin{equation} \label{WebsterEquationBoundaryCondition} 
\begin{aligned}
  &  - c  \frac{\partial
    \psi}{\partial s}(0,t) +  \frac{\partial \psi}{\partial t}(0,t) = 2 \sqrt{\tfrac{c}{\rho A(0)}} \, \bar u(t)
  \quad \text{ and } \\
  &  - c  \frac{\partial
    \psi}{\partial s}(0,t) - \frac{\partial \psi}{\partial
    t}(0,t) = 2 \sqrt{\tfrac{c}{\rho A(0)}} \, \bar y(t) 
\end{aligned}
\end{equation}
for Webster's equation. Other kinds of boundary conditions, even
nonlinear ones, on $\Gamma(0)$ are possible, and their treatment
follows the lines of Section \ref{WebsterSec}. 


In Fig.~\ref{Fig1}, both the wave equation and Webster's equation are
boundary controlled by a common external signal, apart from averaging
over $\Gamma(0)$. To produce an exact cancellation of outputs in the
upper right corner of Fig.~\ref{Fig1}, we need to directly control the
state of Webster's equation by the forcing functions $F$, $G$, and $H$
\begin{align} \label{ControlTermsF}
  F(s,t) & := - \frac{1}{A(s)} \frac{\partial}{\partial s} \left (
  A'(s) \left ( \bar \phi(s) - \frac{1}{2 \pi}\int_{0}^{2 \pi}
  \phi(s,R(s),\theta) \, d \theta \right ) \right ); \\
\label{ControlTermsG}
  G(s,t) & :=  - \frac{2 \pi  \alpha W(s)}{A(s)} \frac{\partial }{\partial t} \left ( \bar \phi(s) - \frac{1}{2
      \pi}\int_{0}^{2\pi}{ \phi(s,R(s),\theta)
      d\theta } \right ); \text{ and } \\
 \label{ControlTermsH}
  H(s,t) & := \int_{\Gamma(s)}{\frac{1}{\Xi} \nabla \left (
      \frac{1}{\Xi} \right )\cdot \nabla \phi \, d A } - \frac{1}{
    A(s)} \int_{ \Gamma(s) } {E \Delta \phi d A }
  \\
  &  -
  \frac{\alpha W(s) \eta(s)}{A(s)} \left ( \int_{0}^{2\pi}{
      \frac{\partial \phi}{\partial t} (s,R(s),\theta) \cos{\theta}
      d\theta } \right ) \nonumber
\end{align}
where the \emph{error function} is defined by
\begin{equation} \label{SSCFError}
   E(s,r,\theta) := \frac{1}{\Xi^2} - \frac{1}{\Sigma(s)^{2}}
 =  - 2 r \kappa(s) \cos{\theta} + \kappa(s)^2 ( r^2 \cos^2{\theta} - R(s)^2/4).
\end{equation}

Now we are ready to give the main result of this article whose proof is
in Section \ref{ProofSection}:
\begin{theorem}
  \label{MainTheorem1}
  Assume that $\phi: \Omega \times \rpluscl  \to \R$ is a
  solution of the wave equation \eqref{OnlyWaveEq} satisfying the
  regularity properties \eqref{WaveEqRegularityAssumptions} and the
  boundary conditions \eqref{DampingBndry} and \eqref{DirichletBndry}.
  \begin{enumerate}
  \item \label{MainTheorem1Claim1} The averaged solution $\bar
    \phi:(0,1) \times  \rpluscl \to \R$, defined by
    \begin{equation} \label{AveragedWaveEqSolution}
      \bar\phi=\frac{1}{A(s)}\int_{\Gamma(s)}\phi dA,
    \end{equation}
    satisfies the regularity conditions
    \begin{equation}     \label{AveragedSolutionRegular}
      \begin{aligned}
        &  \bar \phi \in 
        C^2(\rpluscl; L^2(0,1)) \quad \text{ and } \quad
        \frac{\partial \bar \phi}{\partial s} \in C^1(\rpluscl;L^2(0,1)), 
      \end{aligned}
    \end{equation}
  \item \label{MainTheorem1Claim2} The forcing terms $F = F(s,t)$, $G
    = G(s,t)$, and $H = H(s,t)$, as defined in \eqref{ControlTermsF}
    -- \eqref{ControlTermsH}, satisfy $F \in C(\rpluscl;L^2(0,1))$ and
    $G, H \in C^1(\rpluscl;L^2(0,1))$. 
  \item \label{MainTheorem1Claim3} The averaged solution $\bar \phi$ is a
    weak solution of
    \begin{equation} \label{WebsterEq1}  
      \begin{aligned}
        & \frac{1}{A(s)} \frac{\partial}{\partial s} \left ( A(s)
          \frac{\partial \bar \phi }{\partial s} \right ) - \frac{2 \pi
          \alpha W(s)}{A(s)} \frac{\partial \bar \phi }{\partial t}
        -  \frac{1}{c^2 \Sigma(s)^{2}} \frac{\partial^2 \bar \phi}{ \partial t^2} \\
        & = F + G + H. 
      \end{aligned}    
    \end{equation}
  \item \label{MainTheorem1Claim4} If $\phi$ satisfies the
    control/observation boundary conditions
    \eqref{WaveEquationInputBCResultSec} and $R'(0) = \kappa(0) = 0$,
    then $\bar \phi$ satisfies the corresponding boundary conditions
    \eqref{WebsterEquationBoundaryCondition}.
  \end{enumerate}
\end{theorem}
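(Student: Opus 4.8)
The plan is to obtain \eqref{WebsterEq1} by averaging the wave equation $\phi_{tt}=c^2\Delta\phi$ over the cross sections $\Gamma(s)$, carrying along each curvature-induced correction so that it lands in exactly one of $F$, $G$, $H$, and to read off the regularity statements (i)--(ii) from that of $\phi$.

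For (i)--(ii) I would express $\bar\phi$, its derivatives, and the forcing terms directly through $\phi$. Time derivatives commute with the cross-sectional average, so $\partial_t^k\bar\phi=\tfrac1{A}\int_{\Gamma(s)}\partial_t^k\phi\,dA$ and the asserted time regularity is inherited from $\phi\in C^2(\rpluscl;L^2(\Omega))\cap C^1(\rpluscl;H^1(\Omega))$ together with boundedness of the averaging map $L^2(\Omega)\to L^2(0,1)$; for the $s$-derivative one differentiates under the moving-domain integral to get the Leibniz identity
\begin{equation*}
  \int_{\Gamma(s)}\phi_s\,dA=A(s)\,\bar\phi_s+A'(s)\Bigl(\bar\phi-\tfrac1{2\pi}\int_0^{2\pi}\phi(s,R(s),\theta)\,d\theta\Bigr),
\end{equation*}
in which $\phi_s=\tfrac1{\Xi}\,\bt\cdot\nabla\phi$ is controlled by $\nabla\phi\in C^1(\rpluscl;L^2(\Omega;\R^3))$ (recall $\Xi$ is bounded above and below by \eqref{NoFolding}) and the circular trace by the trace theorem applied to $\phi\in C^1(\rpluscl;H^1(\Omega))$. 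Assembling $F$, $G$, $H$ from these ingredients together with $\partial\phi/\partial\nu\in C(\rpluscl;L^2(\partial\Omega))$, $\Delta\phi\in C(\rpluscl;L^2(\Omega))$, and the equation $\Delta\phi=c^{-2}\phi_{tt}$ gives (ii); this bookkeeping uses the full strength of \eqref{WaveEqRegularityAssumptions} with no slack.

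For (iii) I would test $\phi_{tt}=c^2\Delta\phi$ against a function $\zeta=\zeta(s)\in C_c^\infty(0,1)$, lifted to $\Omega$, and apply Green's identity, using \eqref{NablaInTubeEq}--\eqref{DiffOps} so that $\nabla\zeta=\zeta'(s)\,\Xi\,\bt$ and, with \eqref{VolumeDifferential}, $\nabla(\zeta/\Xi)\cdot\nabla\phi=\zeta'\,\Xi\phi_s+\zeta\,\nabla(1/\Xi)\cdot\nabla\phi$. The $\zeta'$-piece, after the Leibniz identity above and one integration by parts in $s$ (legitimate since $\zeta$ vanishes at $0$ and $1$), produces the weak form of the Webster diffusion term and of the forcing term $F$; the $\nabla(1/\Xi)$-piece produces the first term of $H$. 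The boundary integral over $\partial\Omega$ collapses, by \eqref{DirichletBndry} and the compact support of $\zeta$, to the wall $\Gamma$, where the damping condition \eqref{DampingBndry} turns $\partial\phi/\partial\nu$ into $-\alpha\phi_t$ and the expansion $\Xi^{-1}|_{r=R(s)}=1-\eta(s)\cos\theta$ separates it into the dissipation term $-\tfrac{2\pi\alpha W}{A}\bar\phi_t$ and the corrections $G$ and the last term of $H$. Finally, writing $\Sigma^{-2}=\Xi^{-2}-E$ and using the elementary identities $\int_{\Gamma(s)}\Xi^{-1}dA=A(s)$ and $\int_{\Gamma(s)}\Xi^{-2}dA=A(s)\Sigma(s)^{-2}$, so that $\int_{\Gamma(s)}E\,dA=0$ (which is precisely why $\Sigma$ and $E$ are defined as in \eqref{SoundSpeedCorrectionFactor} and \eqref{SSCFError}), together with $\phi_{tt}=c^2\Delta\phi$ rewrites $\int_{\Gamma(s)}\Xi^{-2}\Delta\phi\,dA$ as $\tfrac{A}{c^2\Sigma^2}\bar\phi_{tt}$ up to the middle term of $H$. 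Collecting the pieces is the weak form of \eqref{WebsterEq1}.

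Claim (iv) is then a boundary computation at $s=0$: $\kappa(0)=0$ makes $\Xi(0,\cdot,\cdot)\equiv1$, so the exterior normal derivative on $\Gamma(0)$ equals $-\phi_s|_{s=0}$, while $R'(0)=0$ gives $A'(0)=0$, which kills the boundary term in the Leibniz identity and yields $\tfrac1{A(0)}\int_{\Gamma(0)}\phi_s|_{s=0}\,dA=\bar\phi_s(0,t)$; averaging \eqref{WaveEquationInputBCResultSec} over $\Gamma(0)$ with $\bar u=\tfrac1{A(0)}\int_{\Gamma(0)}u\,dA$ and $\bar y=\tfrac1{A(0)}\int_{\Gamma(0)}y\,dA$ then produces \eqref{WebsterEquationBoundaryCondition}. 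I expect the main difficulty to be not this algebra but the rigor at the minimal available regularity: the differentiation under the moving-domain integral, Green's identity, and the traces on $\Gamma$ and $\Gamma(0)$ must each be justified using only \eqref{WaveEqRegularityAssumptions} (which is exactly why $\partial\phi/\partial\nu\in C(\rpluscl;L^2(\partial\Omega))$ is assumed separately), the notion of weak solution of \eqref{WebsterEq1} has to be set up to match the identity the computation delivers, and extracting the claimed $C^1$ regularity of $H$ --- whose middle term nominally carries $\Delta\phi$, a priori only continuous in time --- forces one to exploit the equation and the mean-zero structure of $E$. Throughout, the standing assumption that the torsion of $\bg$ vanishes is built into the coordinate formulas \eqref{VolumeDifferential}--\eqref{ExtNormalOnGamma}.
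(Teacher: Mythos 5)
Your proposal is correct in substance and reaches the same weak identity, but for the central claim (iii) it takes a genuinely different route from the paper. The paper works with thin slabs: it applies Green's identity on $\Omega(s,s+h)$ to get the integrated relation $L(s,s+h)=\int_{\Omega(s+h,s)}\nabla(\Xi^{-1})\cdot\nabla\phi\,dV$, multiplies by a test function $\zeta(s,t)$, shifts the difference quotient onto $\zeta$ by a change of variables, and passes to the limit $h\to 0$; the limit is justified term by term via Proposition~\ref{AverageConvProp}, i.e.\ the Lebesgue differentiation theorem together with an $L^2$ bound by the Hardy--Littlewood maximal function. You instead apply Green's identity once on all of $\Omega$ against the global test function $\zeta(s)/\Xi$ and disintegrate the volume integrals by Fubini using $dV=\Xi^{-1}\,dA\,ds$; the identity $\nabla(\zeta/\Xi)\cdot\nabla\phi=\zeta'\,\Xi\phi_s+\zeta\,\nabla(1/\Xi)\cdot\nabla\phi$ reproduces exactly the three groups of terms in \eqref{WaveEqAveragedRHS}, and no $h\to 0$ limit or maximal-function machinery is needed. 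Both routes then rely on the same ingredients: the Leibniz/intertwining identity \eqref{ABIntertwiningRelation} (whose rigorous justification at the available regularity is Propositions~\ref{AOperatorProp}--\ref{BOperatorProp} plus the trace result, Proposition~\ref{DirichletTraceProp}), the expansion $\Xi^{-1}|_{r=R}=1-\eta\cos\theta$ on the wall, and the mean-zero splitting via $\Sigma$ and $E$. What your approach buys is the elimination of Proposition~\ref{AverageConvProp} entirely, at the modest cost of checking that the cited Green's identity tolerates the smooth weight $\zeta/\Xi$ (it does). Your treatment of claims (i), (ii) and (iv) coincides with the paper's, and you correctly identify the one soft spot that the paper itself glosses over, namely that the stated $C^1$-in-time regularity of $G$ and $H$ is not an immediate consequence of \eqref{WaveEqRegularityAssumptions} by mere inspection of the formulas.
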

Claim \eqref{MainTheorem1Claim3} means plainly that
\begin{equation}
  \label{WebsterWeakForm}
  \begin{aligned}
    &\int_0^T\int_0^1\left(-\frac{\partial \bar\phi}{\partial
        s}\frac{\partial \zeta}{\partial
        s}-\frac{1}{c^2\Sigma(s)}\frac{\partial^2\bar\phi }{\partial
        t^2}\zeta\right)A(s) \, dsdt-  2\pi\alpha \int_0^T\int_0^1 
        W(s)\frac{\partial \bar\phi}{\partial t}\zeta
        dsdt\\ &=\int_0^T\int_0^1(F(s,t)+G(s,t)+H(s,t))\zeta(s,t)
        A(s) \, dsdt 
      \end{aligned}
    \end{equation}
    for all test functions $\zeta\in C_0^\infty((0,1)\times (0,T))$
    and all $T>0$.  Claims \eqref{MainTheorem1Claim3} and
    \eqref{MainTheorem1Claim4} state that the feed-forward connection
    in Fig.~\ref{Fig1} is well-defined, and the ``tracking error''
    output on the right vanishes.  By claims
    \eqref{MainTheorem1Claim1} and \eqref{MainTheorem1Claim2}, the
    solution and all signals in \eqref{WebsterEq1} are functions
    rather than distributions whenever the conditions of
    Proposition~\ref{PieniPorpositioRatkeavuudestaKaava1} are
    satisfied.

    Note that Theorem~\ref{MainTheorem1} does not claim that $\bar
    \phi$ would satisfy \eqref{WebsterHorn}. By substracting the
    Webster's equations on top of Figs.~\ref{Fig1}--\ref{Fig2}, we see
    that the difference $\bar \phi - \psi$ satisfies a non-homogenous
    Webster's equation \eqref{WebsterEq1} with zero boundary control
    at $s=0$, leading to error estimates to be given in
    \cite{L-M:PEEWEWP}.

\section{From wave equation to  Webster's equation \label{WebsterSec}}

In this section, we derive Webster's equation from the wave equation.
We postpone the detailed rigorous justification of these computations
under the assumptions \eqref{WaveEqRegularityAssumptions} until
Section~\ref{ProofSection}.  The reader may find it useful to think of
$\phi$ as a function in $C^\infty(\overline{\Omega})$, albeit this is
of course too good to be true for a general solution of the wave
equation.


\subsubsection*{Derivation of Webster's equation}

To derive Webster's equation \eqref{WebsterEq1}, we first obtain a
weak formulation of the wave equation \eqref{OnlyWaveEq} --
\eqref{DampingBndry} in terms of tube coordinates.  Open portions of
the tube $\Omega$ are defined by
 \begin{equation*}
  \Omega(s_0,s_1) = \{ (s,r,\theta) : s \in
  (s_0, s_1), r \in [0, R(s)), \theta \in [0, 2 \pi) \} 
 \end{equation*}
 for $0 \leq s_0 < s_1 \leq 1$. We denote $\Gamma(s_0,s_1) := \Gamma
 \cap \clos{\Omega(s_0,s_1)}$.  On the cross-sectional surface
 $\Gamma(s_0)$ we have $\bnu = -\bt(s_0)$, and similarly we have $\bnu
 = \bt(s_1)$ on $\Gamma(s_1)$.  Thus by 
 \eqref{CurvatureFactor} and \eqref{NablaInTubeEq} we get for the exterior normal derivatives
\begin{equation} \label{EndPlaneBnryOp} 
\begin{aligned} 
\frac{\partial }{\partial \bnu}  & = - \Xi \frac{\partial}{\partial s} 
   \quad \text{ for  } \quad (s_0, r, \theta) \in \Gamma(s_0) \quad \text{ and }  \\
    \frac{\partial }{\partial \bnu} 
 &  =  \Xi \frac{\partial}{\partial s} 
     \quad \text{ for } \quad (s_1, r, \theta) \in \Gamma(s_1).
\end{aligned}
\end{equation}
Clearly $dA = r dr d\theta$ on both $\Gamma(s_0)$ and $\Gamma(s_1)$.

We use a version of Green's identity given in
\cite[Theorem~A.3]{A-L-M:AWGIDDS} for tubular Lipschitz domains, valid
for $\phi$ satisfying regularity requirements
\eqref{WaveEqRegularityAssumptions}.  Together with
\eqref{EndPlaneBnryOp} we obtain
\begin{align*} 
  & \int_{\Omega(s_0,s_1)}{ \Xi^{-1} \Delta \phi \, d V} \\ 
  & = - \int_{\Omega(s_0,s_1)}{\nabla \left ( \frac{1}{\Xi} \right )\cdot   \nabla \phi  \, d V} 
  + \int_{\Gamma(s_0,s_1) }{ \frac{1}{\Xi} \frac{\partial \phi}{\partial \bnu} d A} + 
\int_{\Gamma(s_0) \cup \Gamma(s_1) }{ \frac{1}{\Xi} \frac{\partial \phi}{\partial \bnu} d A} \\
   & =  - \int_{\Omega(s_0,s_1)}{\nabla \left ( \frac{1}{\Xi} \right ) \cdot \nabla \phi  \, d V}
- \alpha \int_{\Gamma(s_0,s_1) }{ \frac{1}{\Xi}  \frac{\partial \phi}{\partial t}  d A} \\
  &  \hspace{5cm} + \int_{ \Gamma(s_1)}{\frac{\partial \phi }{\partial s}  d A}
  - \int_{\Gamma(s_0)}{ \frac{\partial \phi }{\partial s}  d A} 
\end{align*}
where $\int_{\Gamma(s_0,s_1) }{ \Xi^{-1} \frac{\partial
    \phi}{\partial \bnu} d A} = - \alpha \int_{\Gamma(s_0,s_1) }{
  \Xi^{-1} \frac{\partial \phi}{\partial t} d A}$ is implied by
\eqref{DampingBndry}.  
On the other hand, the regularity requirements
\eqref{WaveEqRegularityAssumptions} include the fact that $\Delta\phi$
is an $L^2$ function. Thus $\phi$ satisfies the wave equation
\eqref{OnlyWaveEq} pointwise almost everywhere.  Using this
observation and the volume element \eqref{VolumeDifferential}, we get
\begin{align*}
   \int_{\Omega(s_0,s_1)}{ \Xi^{-1} \Delta \phi \, d V}
  & = \int_{\Omega(s_0,s_1)}{\frac{1}{c^2 \Xi} \frac{\partial^2 \phi}{\partial t^2} \, d V} 
   = \int_{s_0}^{s_1} {\left ( \int_{0}^{2 \pi} \int_{0}^{R(s)} {
        \frac{ 1}{c^2 \Xi^2}
        \frac{\partial^2 \phi}{\partial t^2}
        r\, dr\, d\theta } \right ) \, ds } \\
  & = \int_{s_0}^{s_1}{   \left (
      \int_{ \Gamma(s)}{\frac{1}{c^2 \Xi^2} \frac{\partial^2 \phi}{\partial t^2}  d A} \right ) \, ds}.
\end{align*}
We combine these two expressions for the same thing, and get
\begin{equation} \label{WaveEqAveraged}
  L(s_0,s_1)  =  \int_{\Omega(s_0,s_1)}{\nabla \left ( \frac{1}{\Xi} \right )\cdot   \nabla \phi  \, d V} 
\end{equation}
where
\begin{equation}
\label{WaveEqAveragedRHS}
\begin{aligned} 
  L(s_0,s_1) & := \overbrace{\int_{ \Gamma(s_1) }{\frac{\partial \phi }{\partial s}  d A}
  - \int_{\Gamma(s_0)  }{ \frac{\partial \phi }{\partial s}  d A}}^{\text{(i)}} \\
& -  \alpha \underbrace{\int_{\Gamma(s_0,s_1) }{ \frac{1}{\Xi}  \frac{\partial \phi}{\partial t}  d A}}_{\text{(ii)}} - \underbrace{ \int_{s_0}^{s_1} { 
    \left ( \int_{ \Gamma(s) }{\frac{1}{c^2 \Xi^2} \frac{\partial^2 \phi}{\partial t^2}  d A} \right    ) \, ds}}_{\text{(iii)}}. 
\end{aligned}
\end{equation}
We note that \eqref{WaveEqAveraged} -- \eqref{WaveEqAveragedRHS} hold
for any solution $\phi$ of the wave equation satisfying the regularity
conditions \eqref{WaveEqRegularityAssumptions}.
\begin{remark} 
  Using \eqref{CurvatureFactor} -- \eqref{NablaInTubeEq} we see that
\begin{equation*} 
      \nabla \left ( \frac{1}{\Xi} \right ) 
= - \bt r \kappa' \Xi \cos{\theta}  - \bn \kappa.
\end{equation*}
We conclude that in the case of uncurved tube $\kappa = 0$, the right
hand side of \eqref{WaveEqAveraged} vanishes, and the we get
$L(s_0,s_1) = 0$ for all $s_0, s_1 \in (0,1)$. The equation
$L(s_0,s_1) = 0$ can be regarded as an integrated form of Webster's
model.  
\end{remark}

We aim at deriving the loaded Webster's equation \eqref{WebsterEq1}
for the averaged solution $\bar\phi$ given by
\eqref{AveragedWaveEqSolution}.  To achieve this in the general case
for curved tubes, we first study the limit
\begin{equation}\label{LimitOfEL}
  \lim_{s' \to  s}\frac{L(s,s')}{s' - s};
\end{equation}
here $L(s,s')$ is given by \eqref{WaveEqAveragedRHS}. A rigorous
interpretation of this limit will be given in the proof of
Theorem~\ref{MainTheorem1}.  Then we express the outcome of the limit
process in terms of the averaged solution $\bar\phi$.  Since the
averaged solution $\bar\phi$ solves Webster's equation only
approximately, we obtain a non-vanishing load term $f$ in
\eqref{WebsterEq1}.


\subsubsection*{Terms (i) in \eqref{WaveEqAveragedRHS}}
The first two terms in \eqref{WaveEqAveragedRHS} give the elliptic
part of Webster's equation.  We rewrite
 $ \frac{\partial }{\partial s}\left(\int_{\Gamma(s)}\phi dA\right)=
  \frac{\partial }{\partial s}\left(A(s)\bar\phi\right)$
as
\begin{align*}
   A(s) \frac{\partial \bar \phi }{\partial s} & = - \frac{2
    R'(s)}{R(s)} {\int_{ \Gamma(s) }{\phi d A}}
  + \frac{\partial }{\partial s} \left ( {\int_{\Gamma(s)}{\phi d A}} \right )  \\
  & = -  A'(s) \bar \phi +
  \frac{\partial}{\partial s} \left ( {\int_{\Gamma(s)}{\phi d A}}
  \right ). 
\end{align*}
The last term is the limit as $s' \to s$ of
\begin{align*}
  & \frac{1}{s' - s} \left (\int_{\Gamma(s')}{\phi dA} - \int_{\Gamma(s)}{\phi dA} \right )  \\
  & = \int_{0}^{2 \pi} \int_{0}^{R(s')}\frac{\phi(s',r,\theta) -
    \phi(s,r,\theta)}{s' - s} r \, dr \, d\theta  
  \\
  & + \frac{R(s') - R(s)}{s' - s} \int_{0}^{2 \pi}\left (
    \frac{1}{R(s') - R(s)}
    \int_{R(s)}^{R(s')} \phi(s,r,\theta) r \, dr  \right ) d \theta
\end{align*}
which gives 
\begin{equation*}
  \frac{\partial }{\partial s}\left(\int_{\Gamma(s)}\phi dA\right)=
  \int_{\Gamma(s) }{ \frac{\partial \phi }{\partial s} d A} 
  +  R(s) R'(s) \int_{0}^{2 \pi} \phi(s,R(s),\theta) \, d \theta.
\end{equation*}
From these expressions, we now obtain 
\begin{equation*}
  \int_{\Gamma(s)}\frac{\partial \phi}{\partial s}dA=
  A'(s)\left(\bar\phi-\frac{1}{2\pi}\int_0^{2\pi}
    \phi(s,R(s),\theta)d\theta\right)
  +A(s)\frac{\partial \bar\phi}{\partial s} .
\end{equation*}
Initially this holds for all $\phi \in C^\infty(\overline{\Omega})$
and $s \in (0,1)$. After an application of
Proposition~\ref{BOperatorProp} and Proposition~\ref{AOperatorProp} we
see that this is true also for $\phi$ satisfying
\eqref{WaveEqRegularityAssumptions} for almost every $s$.  Thus
\begin{align} \label{SpaceDerivativeLimit}
& \lim_{s' \to s} {\frac{1}{s'-s}\left ( \int_{ \Gamma(s') }{\frac{\partial \phi }{\partial s}  d A}
  - \int_{\Gamma(s)  }{ \frac{\partial \phi }{\partial s}  d A} \right )} =  
 \frac{\partial}{\partial s}\int_{\Gamma(s)  }{ \frac{\partial \phi }{\partial s}  d A}  \\
&   =  \frac{\partial}{\partial s}\left(A(s) 
  \frac{\partial \bar \phi }{\partial s}\right)
  + \frac{\partial}{\partial s}\left(A'(s) \left ( \bar \phi(s)  - \frac{1}{2 \pi}\int_{0}^{2 \pi}  
    \phi(s,R(s),\theta) \,  d \theta \right )\right) \nonumber
\end{align}
The reader can recognise the elliptic term of Webster's equation on
the last line. The assumptions \eqref{WaveEqRegularityAssumptions} are
not strong enough to give the averaged solution $\bar\phi$ two weak
derivatives with respect to $s$.  Hence we will interpret the last
equation in the sense of distributions in the proof of
Theorem~\ref{MainTheorem1}.


\subsubsection*{Term (ii) in \eqref{WaveEqAveragedRHS}}
For the boundary dissipation term in \eqref{WaveEqAveragedRHS} we get
\begin{align*}
  & -  \alpha \int_{\Gamma(s,s') }{ \frac{1}{\Xi}  \frac{\partial \phi}{\partial t}  d A}
= -  \alpha \frac{\partial }{\partial t} \int_{s}^{s'} 
  \left (   \int_{0}^{2\pi}{ \phi(\tilde s,R(\tilde s),\theta) \frac{ d\theta }{\Xi} } \right )  W(\tilde s)  d\tilde s 
\end{align*}
by using \eqref{SurfaceAreaElement}.
Thus
\begin{equation}\label{DissipationTermEq1}
  \begin{aligned}
  &   \lim_{s' \to s}{\frac{-  \alpha}{s' - s} \int_{\Gamma(s,s') }{ \frac{1}{\Xi}  \frac{\partial \phi}{\partial t}  d A}} \\
  & = - \alpha \lim_{s' \to s} \left [ {\frac{1}{s' - s} }
    \int_{s}^{s'}
    \left (   \int_{0}^{2\pi}{ \frac{\partial \phi}{\partial t}(\tilde s,R(\tilde s),\theta)  \frac{ d\theta }{\Xi} } \right )  W(\tilde s)  d\tilde s \right ] \\
  & = - \alpha W(s) \frac{\partial }{\partial t} \left (
    \int_{0}^{2\pi}{ \phi(s,R(s),\theta) \frac{ d\theta }{\Xi} }
  \right ) .
\end{aligned}
\end{equation}
We will use Proposition~\ref{AverageConvProp} to justify this limit.

To proceed we note that by \eqref{CurvatureFactor}, the expression
inside the parenthesis takes the form
\begin{align*}
 \int_{0}^{2\pi}{ \phi(s,R(s),\theta) \frac{ d\theta }{\Xi}}   = & 
  \int_{0}^{2\pi} {\phi(s,R(s),\theta)
    (1 - \eta(s) \cos{\theta} ) d\theta } \\
   =  & 2 \pi \bar \phi(s) - 2 \pi \left ( \bar \phi(s) - \frac{1}{2
      \pi}\int_{0}^{2\pi}{ \phi(s,R(s),\theta)
      d\theta } \right ) \\
  & - \eta(s) \int_{0}^{2\pi}{ \phi(s,R(s),\theta) \cos{\theta} d\theta }.
\end{align*}
Thus we conclude that 
\begin{align}
  \label{DissipationTermEq}
  & \lim_{s' \to s}{\frac{-  \alpha}{s' - s} \int_{\Gamma(s,s') }{
      \frac{1}{\Xi} 
      \frac{\partial \phi}{\partial t}  d A}}
  = -  2 \pi \alpha W \frac{\partial \bar \phi }{\partial t} \nonumber  \\
  & +  2 \pi  \alpha W \frac{\partial }{\partial t} 
  \left ( \bar \phi(s) - \frac{1}{2
      \pi}\int_{0}^{2\pi}{ \phi(s,R(s),\theta)
      d\theta } \right ) \\
  & 
  + \alpha W \eta  \frac{\partial }{\partial t} 
  \left (\int_{0}^{2\pi}{ \phi(s,R(s),\theta) \cos{\theta} d\theta } 
  \right ). \nonumber
\end{align}

\subsubsection*{Term (iii) in \eqref{WaveEqAveragedRHS}}
We take the familiar limit also in the final term in
\eqref{WaveEqAveragedRHS}. We get
\begin{align} \label{TimeDerivativeLimit}
  & \lim_{s' \to s}{\frac{1}{s' - s} \int_{s}^{s'} { \left ( \int_{
          \Gamma(s) }{\frac{1}{c^2 \Xi^2} \frac{\partial^2
            \phi}{\partial t^2} d A} \right ) \, ds}} = \frac{1}{c^2}
  \int_{ \Gamma(s) }{\frac{1}{\Xi^2} \frac{\partial^2 \phi}{\partial
      t^2} d A} \\ & = \frac{1}{c^2} \frac{\partial^2 }{ \partial t^2}
  \left ( \int_{ \Gamma(s) }{\frac{\phi d A}{\Xi^2} } \right ).
  \nonumber
\end{align}
We will appeal to Proposition~\ref{AverageConvProp} to deal with this
limit.  In order to obtain \eqref{WebsterEq1}, we must express most of
the contribution of the term containing $\int_{ \Gamma(s) }{\Xi^{-2}
  \phi \, dA }$ in \eqref{TimeDerivativeLimit} using the averaged
solution $\bar \phi$.  Unfortunately, the curvature factor $\Xi$ is
not constant, and we cannot just bring it out from under the integral
sign.  To deal with this problem, we use the sound speed correction
factor $\Sigma(s)$. It is clear that $\Sigma(s)^{-2}$ is the best
estimate for function $\Xi^{-2}$ over $\Gamma(s)$ in the sense of
least squares.  We have
\begin{equation} \label{SoundSpeedCorrectionFactorDerived}
\begin{aligned}
  & \frac{1}{\Sigma(s)^{2}} := \frac{1}{A(s)}
  \int_{ \Gamma(s) }{\frac{d A}{\Xi^2}}  \\
  & = \frac{1}{\pi R(s)^2} \int_{0}^{2 \pi}\int_0^{R(s)}{(1 - r
    \kappa(s) \cos{\theta})^2 \, r dr d\theta} = 1 + \tfrac{1}{4}
  \eta^2(s) 
\end{aligned}
\end{equation}
 where the curvature ratio $\eta(s)$ is given by \eqref{NoFolding}.
 The error function in \eqref{SSCFError} satisfies the identity $\Xi E
 = \Xi^{-1} - \Xi -\tfrac{1}{4}\Xi \eta^2$.  With the aid of this
 we get the splitting
\begin{equation*}
  \int_{ \Gamma(s) }{\frac{\phi d A}{\Xi^2} } = \frac{
  A(s)}{\Sigma(s)^{2}} \bar \phi + \int_{ \Gamma(s) }{E \phi d A }
\end{equation*}
which implies
\begin{equation} \label{TimeDerivativeLimitCont}
   \frac{1}{c^2} \frac{\partial^2 }{ \partial t^2} \left ( \int_{\Gamma(s) }{\frac{\phi d A}{\Xi^2} } \right ) 
   = \frac{ A(s)}{c(s)^2} \frac{\partial^2 \bar \phi}{ \partial t^2}
   + \int_{ \Gamma(s) }{ \frac{E}{c^2} \frac{\partial^2 \phi}{ \partial t^2} \,dA}
\end{equation}
where
\begin{equation} \label{VariableSoundSpeedDef}
 c(s) := c \Sigma(s)
\end{equation}
is the speed of sound after correction by the curvature factor; see
\eqref{SoundSpeedCorrectionFactor}.

Taking the limit on the right hand side of \eqref{WaveEqAveraged}, we
get
\begin{equation} \label{RHSTerm2} 
 \lim_ {s' \to s}
  {\frac{1}{s' - s}\int_{\Omega(s,s')}{\nabla \left ( \frac{1}{\Xi}
      \right )\cdot \nabla \phi \, d V}} =
  \int_{\Gamma(s)}{\frac{1}{\Xi} \nabla \left (
      \frac{1}{\Xi} \right )\cdot \nabla \phi \, d A } .
\end{equation}
Proposition~\ref{AverageConvProp} again justifies this limit.



We now have everything we need for the limit \eqref{LimitOfEL}.  We
put \eqref{WaveEqAveraged}, \eqref{SpaceDerivativeLimit} --
\eqref{TimeDerivativeLimit}, \eqref{TimeDerivativeLimitCont} and
\eqref{RHSTerm2} together, and obtain
\begin{equation} \label{WebsterRSHLimitEq}
  \begin{aligned}
    0=&\frac{1}{A(s)} \lim_{s' \to s}{\frac{L(s,s')}{(s' - s)}}
  -\frac{1}{A(s)}\int_{\Gamma(s)}{\frac{1}{\Xi} \nabla \left (
      \frac{1}{\Xi} \right )\cdot \nabla \phi \, d A } \\
    = & \frac{1}{A(s)} \frac{\partial}{\partial s} \left 
      (\int_{\Gamma(s)}{ \frac{\partial
          \phi }{\partial s} d A} \right )  
    - \frac{\alpha W(s)}{A(s)} \frac{\partial }{\partial t} 
    \left (\int_{0}^{2\pi}{ \phi(s,R(s),\theta) \frac{ d\theta }{\Xi} }\right )\\
    &-  \frac{1}{A(s)c^2} \frac{\partial^2 }{ \partial t^2} 
    \left ( \int_{ \Gamma(s) }{\frac{\phi d A}{\Xi^2} } \right )
    -\frac{1}{A(s)}\int_{\Gamma(s)}{\frac{1}{\Xi} \nabla \left (
      \frac{1}{\Xi} \right )\cdot \nabla \phi \, d A }\\  
     = &
    \frac{1}{A(s)} \frac{\partial}{\partial s} \left ( A(s)
      \frac{\partial \bar \phi }{\partial s} \right )
    -  \frac{2 \pi \alpha W(s)}{A(s)}  \frac{\partial \bar \phi }{\partial t}
    -  \frac{1}{c^2 \Sigma(s)^{2}} \frac{\partial^2 \bar \phi}{ \partial t^2}  \\
    & -  F(s,t) - G(s,t)-H(s,t)
  \end{aligned}
\end{equation}
where $F$, $G$, and $H$ are given by \eqref{ControlTermsF} --
\eqref{ControlTermsH}. Equation \eqref{WebsterRSHLimitEq} is Webster's
equation \eqref{WebsterEq1}. If we assume that the right hand side of
\eqref{WebsterEq1} is negligible, we obtain the Webster's horn
equation \eqref{WebsterHorn} with curvature and dissipation.


\subsubsection*{Webster's control/observation boundary conditions at $s = 0$}

We next derive the boundary conditions
\eqref{WebsterEquationBoundaryCondition} for Webster's equation at $s
= 0$ that correspond to the boundary conditions of the wave equation
on $\Gamma(0)$ in \eqref{WaveEquationInputBCResultSec}.  

We get for smooth $\phi$ by using \eqref{EndPlaneBnryOp}
\begin{equation} \label{ScatteringBoundaryEq1}
\begin{aligned}
   \int_{\Gamma(0)}{\left (c \frac{\partial \phi}{\partial \nu} \pm
      \frac{\partial \phi}{\partial t} \right )dA} & = - c \int_{\Gamma(0)}{\frac{\partial
      \phi}{\partial s}\, dA} \pm \frac{\partial}{\partial t}
  \int_{\Gamma(0)}{\phi \, dA} \\
  & - c  \int_{\Gamma(0)}{\frac{\partial \phi}{\partial s}
       \left (\Xi
      - 1 \right )\, dA}
\end{aligned}
\end{equation}
since $\int_{\Gamma(0)}{\frac{\partial \phi}{\partial \theta} \, dA} =
0$.  The last term on the right hand side of
\eqref{ScatteringBoundaryEq1} vanishes due to the assumption
$\kappa(0) = 0$. Thus, our interest lies in the term
\begin{equation*}
  \int_{\Gamma(0)}{\frac{\partial \phi}{\partial s}\, dA}
  = \int_{0}^{2 \pi} \int_0^{R(0)} { \frac{\partial \phi}{\partial s}(s,r,\theta) r dr d\theta} \quad  \text{ for }
  \quad s = 0.
\end{equation*}
We get
\begin{align*}
  & \int_{0}^{2 \pi} \int_0^{R(0)} { \frac{\partial \phi}{\partial s}(s,r,\theta) r dr d\theta} \\
  = & \frac{\partial }{\partial s} \int_{0}^{2 \pi} {\left (
      \int_0^{R(s)} { \phi (s,r,\theta) r dr }
      + \int_{R(s)}^{R(0)} { \phi (s,r,\theta) r dr } \right ) d\theta} \\
  = & \frac{\partial ( A \bar \phi) }{\partial s} - \frac{\partial
  }{\partial s} \left (\int_{0}^{2 \pi} \int_{R(0)}^{R(s)} { \phi
    (s,r,\theta) r dr d\theta} \right ).
\end{align*}
For the latter term on the right hand side we get
\begin{align*}
  & \int_{0}^{2 \pi} \left ( \frac{\partial }{\partial s} 
    \int_{R(0)}^{R(s)} { \phi (s,r,\theta) r dr } \right ) d\theta \\
  = & \int_{0}^{2 \pi} {\left ( \lim_{s' \to s}{ \int_{R(0)}^{R(s')} {
          \frac{\phi (s',r,\theta)-\phi (s,r,\theta)}{s' - s} r dr }}
      + \lim_{s' \to s}{\frac{1}{s' - s}
        \int_{R(s)}^{R(s')} { \phi (s,r,\theta) r dr }} \right ) d\theta} \\
  = & \int_{0}^{2 \pi} \int_{R(0)}^{R(s)} { \frac{\partial \phi}{\partial s}
    r dr d\theta} + R(s)R'(s) \int_{0}^{2 \pi}{\phi(s,R(s),\theta) \, d \theta}
\end{align*}
and hence by setting $s = 0$ above we obtain
\begin{equation*}
  \int_{\Gamma(0)}{\frac{\partial \phi}{\partial s}\, dA} 
  =  \frac{\partial (A \bar \phi) }{\partial s} \left (0 \right )
  - R(0)R'(0) \int_{0}^{2 \pi}{\phi(0,R(0),\theta) \, d \theta}.
\end{equation*}
We conclude from this and \eqref{ScatteringBoundaryEq1} that
\begin{equation} \label{WebsterScatteringBoundaryCondition}
  \frac{1}{A(0)} \int_{\Gamma(0)}{\left (c \frac{\partial
        \phi}{\partial \nu} \pm \frac{\partial \phi}{\partial t} \right )dA} = - c \frac{\partial
    \bar \phi}{\partial s}(t,0) \pm  \frac{\partial \bar \phi}{\partial t}(t,0)  - K(t) 
\end{equation}
for all $t > 0$ where
\begin{equation} \label{BoundaryTermDisturbation}
  K(t) : = \frac{c A'(0)}{A(0)} \left (\bar \phi(t,0) -  
    \frac{1}{2 \pi} \int_0^{2 \pi}{ \phi(t,0,R(0),\theta)\, d \theta} \right ). 
\end{equation}
Now, if $\phi$ satisfies \eqref{WaveEquationInputBCResultSec} then
$\bar \phi$ satisfies
\begin{equation} \label{WebsterEquationInputOutputBoundaryCondition}
\begin{aligned}
  &  - c(0) \frac{\partial
    \bar \phi}{\partial s}(0,t) +  \frac{\partial \bar \phi}{\partial t}(0,t)  - K(t) = 2 \sqrt{\tfrac{c}{\rho A(0)}} \, \bar u(t) \\
  &  - c(0) \frac{\partial
    \bar \phi}{\partial s}(0,t) - \frac{\partial \bar \phi}{\partial
    t}(0,t) - K(t) = 2 \sqrt{\tfrac{c}{\rho A(0)}} \, \bar y(t) 
\end{aligned}
\end{equation}
where $\bar u(t) : = \frac{1}{A(0)} \int_{\Gamma(0)}{u(\br,t) dA}$.
The assumption $\kappa(0) = 0$ is convenient here, too, since it
implies $c(0) = c$ where $c(s) := c \Sigma(s)$ is the variable sound
speed \eqref{VariableSoundSpeedDef}.

Thus, for smooth $\phi$ we have \eqref{WaveEquationInputBCResultSec}
$\Rightarrow$ \eqref{WebsterEquationBoundaryCondition} if $\kappa(0) =
A'(0) = 0$ which we make a standing assumption in \cite{L-M:PEEWEWP};
see also \cite[p.~1992]{SR:WHER}. Passage from smooth functions $\phi$
to those satisfying \eqref{WaveEqRegularityAssumptions} is
accomplished by a straightforward density argument.

\section{\label{ProofSection} Proof of Theorem \ref{MainTheorem1}}

In this section, we give the detailed proof of the main result based
on the computations in the previous section. Some auxiliary results
are needed first.

\begin{proposition} \label{DirichletTraceProp}
  Assume that $\Omega$ is a tubular domain described by
  \eqref{OmegaDef} and $\phi:\Omega \times \rpluscl \to \R$ satisfies
  the regularity assumptions $\phi \in C^1(\rpluscl; H^1(\Omega))$ and
  $\Delta \phi \in C(\rpluscl; L^2(\Omega))$ in
  \eqref{WaveEqRegularityAssumptions}. By $\Gamma \subset \partial
  \Omega$ denote the walls of the tube.  Then the boundary trace
  satisfies $\phi \rst{\Gamma} \in C(\rpluscl;H^{1}(\Gamma))$.
\end{proposition}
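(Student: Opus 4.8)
The plan is to separate the analysis into a fixed‑time elliptic estimate and a soft argument for continuity in $t$. Fix $t$, write $\phi=\phi(\cdot,t)$, and set $f:=\Delta\phi\in L^{2}(\Omega)$ and $g:=\partial\phi/\partial\nu\in L^{2}(\partial\Omega)$; by \eqref{WaveEqRegularityAssumptions} these belong to the stated spaces and $\phi\in H^{1}(\Omega)$. By the very definition of the (a priori $H^{-1/2}$) co‑normal derivative through Green's formula, $\phi$ is a variational solution of the Neumann problem $-\Delta\phi=-f$ in $\Omega$ with datum $\partial\phi/\partial\nu=g$ — no boundary condition of the wave model is invoked, only that $g$ happens to be an $L^{2}$ function. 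The goal is the estimate
\[
 \bigl\| \phi|_{\Gamma} \bigr\|_{H^{1}(\Gamma)} \;\le\; C\Bigl( \|\phi\|_{H^{1}(\Omega)}+\|f\|_{L^{2}(\Omega)}+\|g\|_{L^{2}(\partial\Omega)} \Bigr),
\]
with $C$ independent of $t$. Granting it, let $\mathcal H$ be the Hilbert space of $\phi\in H^{1}(\Omega)$ with $\Delta\phi\in L^{2}(\Omega)$ and $\partial\phi/\partial\nu\in L^{2}(\partial\Omega)$, normed by the right‑hand side above; the estimate says the trace map $\mathcal H\to H^{1}(\Gamma)$ is bounded linear, while \eqref{WaveEqRegularityAssumptions} (the parts $\phi\in C^{1}(\rpluscl;H^{1}(\Omega))\subset C(\rpluscl;H^{1}(\Omega))$, $\Delta\phi\in C(\rpluscl;L^{2}(\Omega))$ and $\partial\phi/\partial\nu\in C(\rpluscl;L^{2}(\partial\Omega))$) says exactly that $t\mapsto\phi(\cdot,t)$ is a continuous curve in $\mathcal H$. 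Composing the two gives $\phi|_{\Gamma}\in C(\rpluscl;H^{1}(\Gamma))$.

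For the estimate, I observe that $\Omega$ is a bounded Lipschitz domain whose boundary consists of the three smooth faces $\Gamma$, $\Gamma(0)$, $\Gamma(1)$ meeting along the two smooth edge circles $\partial\Gamma(0)$ and $\partial\Gamma(1)$, with no vertices. Since $R$ is differentiable and positive (so $|R'|<\infty$) and \eqref{NoFolding} holds, a short computation in tube coordinates shows that the interior dihedral angle of $\Omega$ along each edge lies strictly in $(0,\pi)$, i.e.\ both edges are convex. I then cover $\overline\Omega$ by finitely many coordinate patches: on patches meeting only the interior or a single smooth face I use the classical $H^{3/2}$ regularity for the Neumann problem with $L^{2}$ right‑hand side and $L^{2}$ co‑normal datum (after flattening the face and smoothly transforming the coefficients, legitimate because by \eqref{NoFolding} the tube coordinate map is a $C^{\infty}$ diffeomorphism onto a straight solid cylinder away from the edges); on patches straddling one of the two edge circles I invoke the corresponding regularity theory for elliptic Neumann problems on domains with convex edges (Grisvard; Maz'ya–Rossmann), in which the first nontrivial edge–singularity exponent $\pi/\omega$ exceeds $1$ precisely because $\omega<\pi$, so it is the $L^{2}$ co‑normal datum, and not the edge, that limits the regularity to $H^{3/2}$. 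Hence $\phi$ is of class $H^{3/2}$ in a neighbourhood of $\overline\Gamma$ in $\overline\Omega$, and its trace onto the smooth face $\Gamma$ is of class $H^{1}(\Gamma)$, with the quantitative bound following from the corresponding local estimates. (Alternatively one could erase the edges by reflecting $\phi$ evenly across the end caps, but that is clean only for a right‑angle edge, e.g.\ when $R'(0)=0$, which is not assumed here; near a general convex edge it is simpler to cite the known weighted estimates.)

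The main obstacle is exactly this behaviour near the two edge circles where the wall meets the end caps: one must verify that the possible corner singularities of $\phi$ there do not destroy the first tangential derivative of the trace along $\Gamma$, and convexity of the edges is what makes this go through. It is worth stressing that the hypothesis $\partial\phi/\partial\nu\in L^{2}(\partial\Omega)$ is indispensable and not a technicality — already on a straight solid cylinder there are harmonic $H^{1}$ functions whose trace on the lateral wall lies only in $H^{1/2}$, not $H^{1}$ — so the estimate genuinely exchanges the $L^{2}$ bound on the co‑normal derivative for the missing half‑derivative of the trace. Finally, since only the trace on the single smooth face $\Gamma$ is claimed, no information about the caps $\Gamma(0),\Gamma(1)$, and in particular none of the boundary conditions \eqref{DampingBndry} or \eqref{DirichletBndry}, enters the proof.
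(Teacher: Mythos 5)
Your proposal is correct, and the soft part of your argument (reduce to a time--independent bounded trace map on the space of $\phi\in H^1(\Omega)$ with $\Delta\phi\in L^2(\Omega)$ and $\partial\phi/\partial\nu\in L^2(\partial\Omega)$, then compose with the continuity in $t$ supplied by \eqref{WaveEqRegularityAssumptions}) is exactly what the paper does at the end of its proof. Where you genuinely diverge is in how the fixed--time $H^{3/2}$ regularity is obtained near the two edge circles where $\Gamma$ meets the caps $\Gamma(0)$ and $\Gamma(1)$: the paper sidesteps the edges entirely by first arranging $R'=0$ near the ends, then extending $\psi$ past the caps by the second--order reflection $3\psi(-s,r,\theta)-2\psi(-2s,r,\theta)$ and smoothing the elongated tube into a domain $\tilde\Omega$ with smooth boundary containing $\Gamma$, after which the Lions--Magenes theory on smooth domains gives $\tilde\psi\in H^{3/2}(\tilde\Omega)$ and the $H^1$ boundary trace by restriction; you instead keep $\Omega$ as it is, localize, and invoke edge--regularity theory for the Neumann Laplacian (Grisvard, Maz'ya--Rossmann) at the two convex edges. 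Both routes work. The paper's reflection trick is more elementary in the machinery it cites but requires one to verify that the extension preserves all of \eqref{WaveEqRegularityAssumptions} (including the Neumann trace on $\partial\tilde\Omega$); your route avoids constructing any extension but leans on heavier corner/edge theory, and your convexity observation is actually stronger than needed --- for $H^{3/2}$ regularity the leading edge exponent $\pi/\omega$ need only exceed $1/2$, so any Lipschitz edge would do (this is in effect the Jerison--Kenig $L^2$ Neumann regularity for Lipschitz domains), whereas $\omega<\pi$ is what one would need for $H^2$. One point where your write--up is arguably more careful than the paper's: you keep $\|\partial\phi/\partial\nu\|_{L^2(\partial\Omega)}$ explicitly in the norm defining $\mathcal H$, which is genuinely necessary (your example of harmonic $H^1$ functions on a cylinder with only $H^{1/2}$ lateral trace makes the point), while the paper's final closed--graph step is phrased on $E(\Delta;L^2(\Omega))$ alone and quietly relies on the same extra datum.
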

\begin{proof}
  Let $\psi \in H^1(\Omega)$ such that $\Delta \psi \in L^2(\Omega)$.
 Since the radius function $R(s)$ is smooth, we may assume that the
 tube $\Omega$ is of constant diameter near its ends $\Gamma(0)$ and
 $\Gamma(1)$ --- if not, use a diffeomorphims to obtain $R'(s) = 0$
 for $s \in (0,\eta) \cup (1 - \eta, 1)$ for $\eta > 0$.  We extend
 $\Omega$ from both ends to a longer tubular open set $\tilde \Omega$
 that has a smooth boundary. This extension can be carried out in many
 ways but the function $\psi$ must be extended to all of $\tilde
 \Omega$ so that \eqref{WaveEqRegularityAssumptions} are satisfied by
 the extended function $\tilde \psi$. Let us consider the end $s = 0$.

For $s \in (-\eta/2, 1)$ define the extension by reflection
\begin{equation}
  \tilde \psi(s, r, \theta) = 
  \begin{cases}
    \psi(s, r, \theta) & \text{ for } s \in (0,1) \\
    3 \psi(-s, r, \theta)  -2 \psi(-2s, r, \theta) & \text{ for } s \in (-1/2,0) \\
  \end{cases}
\end{equation}
and similarly at the other end. This extension gives us a function
$\tilde \psi$ defined on a tube of length $1 + \eta$, and by smoothing
the tube around the ends produces $\tilde \Omega$ having a smooth
boundary with $\Omega \subset \tilde \Omega$ and $\Gamma \subset
\partial \tilde \Omega$. It is easy to see that $\tilde \psi$
satisfies \eqref{WaveEqRegularityAssumptions} since $\psi$ does.
 
Thus $\tilde \psi \in H^{1}(\tilde \Omega)$ with $\Delta \tilde \psi
\in L^2(\tilde \Omega)$ and $\frac{\partial \tilde\psi}{\partial \nu} \in
L^2(\partial \tilde\Omega)$ where the boundary $\partial \tilde \Omega$ is
smooth.  By elliptic regularity theory (see \cite[Remark 7.2 on
p. 188]{L-M:NHBVPAPI}, with $s = 3/2$, $m = 1$, $m_0 = 1$), we
conclude that $\tilde \psi \in H^{3/2}(\tilde \Omega)$.  Using
\cite[Theorem 7.4]{L-M:NHBVPAPI} we conclude that $\tilde \psi \in
D^{3/2}_\Delta(\tilde \Omega) := \{f \in H^{3/2}(\tilde \Omega) :
\Delta f \in \Xi^{-1/2}(\Omega) \}$ where $\Xi^{k}(\Omega)$ has been
defined on \cite[p. 172]{L-M:NHBVPAPI} since $L^2(\tilde \Omega)
\subset \Xi^{-1/2}(\Omega)$. It follows from \cite[Theorem 7.4 on
p. 188]{L-M:NHBVPAPI} that $\tilde \psi\rst{\partial \tilde \Omega}
\in H^{1}(\partial \tilde \Omega)$ and thus $\psi\rst{\Gamma} \in
H^{1}(\Gamma)$ by restriction. Now, the trace mapping $\psi \mapsto
\psi\rst{\Gamma}$ is bounded from $E(\Delta; L^2(\Omega)) := \{ f \in
H^1(\Omega) : \Delta f \in L^2(\Omega) \}$ into $L^2(\Omega)$, and its
range is in $H^1(\Gamma)$; it is thus bounded from $E(\Delta;
L^2(\Omega))$ into $H^1(\Gamma)$.  Since $\phi \in C(\rpluscl; E(\Delta;
L^2(\Omega)))$, we conclude that $\phi\rst{\Gamma} \in C(\rpluscl;
H^1(\Gamma))$.
\end{proof}

\begin{proposition} \label{BOperatorProp} Let $\Gamma$ be the wall of
  the tube, as in Proposition \ref{DirichletTraceProp}.  Define for $g
  \in H^{1}(\Gamma)$ the linear mapping $\mathcal B$ by
\begin{equation*}
  (\mathcal B g)(s) := \frac{1}{2 \pi}\int_{0}^{2 \pi} g(s,R(s),\theta)
\, d \theta \quad \text{for}\quad s \in (0,1).
\end{equation*}
Then 
\begin{equation*}
  \mathcal B \in\BLO(H^{k}(\Gamma);H^{k}(0,1))\quad\text{for all}\quad
  k \in \R.
\end{equation*}
\end{proposition}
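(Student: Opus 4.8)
The plan is to conjugate $\mathcal B$ to the plain angular average on a product domain, check boundedness there for nonnegative integer $k$ by an elementary computation, and then propagate it to all real $k$ by interpolation and duality. The wall of the tube is globally parametrised by the chart $\Phi(s,\theta):=\bg(s)+R(s)\cos\theta\,\bn(s)+R(s)\sin\theta\,\bb(s)$, which by the no-folding assumption \eqref{NoFolding} together with the smoothness of $R$ and of the Fren\'et frame (recall $\min_s\kappa(s)>0$) is a diffeomorphism of $[0,1]\times\mathbb T$ onto $\overline\Gamma$ whose derivatives, and those of $\Phi^{-1}$, are bounded. Hence $g\mapsto g\circ\Phi$ is an isomorphism $H^k(\Gamma)\to H^k((0,1)\times\mathbb T)$ for every $k\in\R$ by diffeomorphism invariance of Sobolev spaces, and under this isomorphism $\mathcal B$ becomes exactly the angular average $\mathcal A h(s):=\tfrac1{2\pi}\int_0^{2\pi}h(s,\theta)\,d\theta$, since $\mathcal Bg(s)=\mathcal A(g\circ\Phi)(s)$ by definition and no Jacobian weight intervenes. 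So it is enough to show $\mathcal A\in\BLO\big(H^k((0,1)\times\mathbb T);H^k(0,1)\big)$ for all $k\in\R$.

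For a nonnegative integer $k$ this is immediate. Differentiating under the integral sign gives $(\mathcal A h)^{(j)}(s)=\tfrac1{2\pi}\int_0^{2\pi}\partial_s^jh(s,\theta)\,d\theta$, so Cauchy--Schwarz in $\theta$ followed by integration in $s$ yields $\|(\mathcal Ah)^{(j)}\|_{L^2(0,1)}^2\le\tfrac1{2\pi}\|\partial_s^jh\|_{L^2((0,1)\times\mathbb T)}^2$, and summing over $j\le k$ bounds $\|\mathcal Ah\|_{H^k(0,1)}$ by a constant times $\|h\|_{H^k((0,1)\times\mathbb T)}$. Equivalently, writing $h=\sum_{n\in\Z}h_n(s)e^{in\theta}$ one has $\mathcal Ah=h_0$, so $\mathcal A$ is the restriction to the $s$-variable of the orthogonal projection onto angle-independent functions; this projection commutes with $\partial_s$ and annihilates $\partial_\theta$, hence is obviously bounded on each $H^k$, $k\in\N$.

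To reach arbitrary real $k$ I would argue in two steps. For non-integer $k>0$, the spaces $H^k$ on $(0,1)$ and on $(0,1)\times\mathbb T$ are interpolation spaces between the two neighbouring integer-order spaces, and $\mathcal A$ is bounded at both endpoints, hence on $H^k$ as well. For $k<0$, I would use duality: the extension $Ef(s,\theta):=f(s)$, constant in $\theta$, has only the derivatives $\partial_s^jf$ nonvanishing and is therefore bounded $H^{-k}(0,1)\to H^{-k}((0,1)\times\mathbb T)$; since $\mathcal A=\tfrac1{2\pi}E'$ with respect to the $L^2$ pairings, $\mathcal A$ is bounded from $H^{k}((0,1)\times\mathbb T)$ to $H^{k}(0,1)$ under the usual identification of negative-order Sobolev spaces with duals of positive-order ones. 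Transporting back through $\Phi$ then gives the proposition.

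I do not expect a genuine obstacle here: the passage $\mathcal B\leftrightarrow\mathcal A$ is an exact conjugation, and the analytic content reduces entirely to the elementary integer-order estimate. The only point demanding care is bookkeeping---invoking diffeomorphism invariance of Sobolev spaces, the interpolation scale, and the duality identifications with a convention for the fractional- and negative-order spaces on the interval and on the cylinder (manifolds \emph{with} boundary) that is used consistently throughout.
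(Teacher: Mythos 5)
Your proof is correct, but it takes a genuinely different route from the paper's. The paper works intrinsically on $\Gamma$: it computes the adjoint explicitly, $(\mathcal B^*w)(s,R(s),\theta)=\tfrac{w(s)}{2\pi W(s)}$ (the geometric weight $W$ from the surface measure \eqref{SurfaceAreaElement} enters here), deduces $\mathcal B\in\BLO(H^k_0(\Gamma);H^k_0(0,1))$ by duality, and then upgrades from $H^k_0$ to the full $H^k$ by exhibiting an explicit complement $W^k$ of functions affine in $s$ with $W^k\dot+H^k_0(\Gamma)=H^k(\Gamma)$ for $k\geq 2$, finishing by interpolation. You instead conjugate $\mathcal B$ through the global chart $\Phi$ to the bare angular average on the flat cylinder $(0,1)\times\T$, where it is the orthogonal projection onto the zero Fourier mode in $\theta$; this is manifestly bounded on the full $H^k$ for every integer $k\geq 0$ (it commutes with $\partial_s$ and annihilates $\partial_\theta$), after which interpolation gives fractional $k>0$ and duality against the constant-in-$\theta$ extension gives $k<0$. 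What your route buys is that it bypasses the $H^k_0$ detour entirely and, with it, the direct-sum decomposition $W^k\dot+H^k_0(\Gamma)=H^k(\Gamma)$ --- which is in fact the delicate point of the paper's argument, since for $k\geq2$ membership in $H^k_0(\Gamma)$ forces vanishing of higher-order traces at $s=0,1$ that a complement of functions affine in $s$ cannot absorb; your Fourier-projection argument needs no such decomposition. What the paper's route buys is the explicit formula for $\mathcal B^*$, which is reused implicitly elsewhere, and the avoidance of any appeal to diffeomorphism invariance of the Sobolev scale. The two points you flag yourself --- consistency of the definition of $H^k(\Gamma)$ with the chart $\Phi$, and the choice of convention ($H^{-m}$ versus $(H^m_0)'$ versus $(H^m)'$) for negative orders on manifolds with boundary --- are indeed the only bookkeeping obligations; neither is an obstacle, and the latter is no worse than in the paper's own duality step.
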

\begin{proof}
   For any $g \in
  C^{\infty}(\Gamma)$ and $w \in L^2(0,1)$, we have $\mathcal B g \in
  C^{\infty}(0,1)$ and
  \begin{align*}
    & \left <\mathcal B g, w \right >_{L^2(0,1)} = \frac{1}{2 \pi}  \int_0^1 {\left ( 
      \int_0^{2 \pi}{g(s,R(s),\theta) \, d \theta} \right ) w(s) \, ds } \\
    &  = \int_0^1 \int_0^{2 \pi} {  g(s,R(s),\theta)  \frac{ w(s)}{2 \pi W(s)} \, dA  }
    = \left < g ,   \tilde w   \right >_{L^2(\Gamma)}
  \end{align*}
  where $d A = W(s) d \theta ds$ and $\tilde w(s,\theta) = \frac{
    w(s)}{2 \pi W(s)}$. Because $C^{\infty}(\Gamma)$ is dense in
  $H^{1}(\Gamma)$, we conclude that $\mathcal B^* \in
  \BLO(H^{-k}(0,1);H^{-k}(\Gamma))$ for all $k \geq 0$ where $\left
  (\mathcal B^* w \right )(s, R(s), \theta) = \frac{w(s)}{2 \pi W(s)}$
  for all $w \in L^2(0,1))$ and $(s, R(s), \theta) \in \Gamma$.  In
  particular, $\mathcal B \in \BLO(H^{k}_0(\Gamma);H^{k}_0(0,1))$ for
  all $k \geq 0$.

  Define the spaces 
  \begin{equation*}
    W := \{ r \in C(\Gamma) : r(s,R(s), \theta) = s r_0(\theta) +
    (1 - s) r_1(\theta) \text{ for } r_0, r_1 \in C(\T) \}
  \end{equation*}
  and $W^k := W \cap H^k(\Gamma)$ equipped with the norm of
  $H^k(\Gamma)$. Then $W^k$ is a closed subspace of $H^k(\Gamma)$ for
  $k \geq 2$, and it is easy to see that $W^k \dot + H^k_0(\Gamma) =
  H^k(\Gamma)$. The operator $\mathcal B$ maps $W^k$ boundedly onto
  linear functions on $(0,1)$ that are equipped with the norm of
  $H^k(0,1)$. We conclude that $\mathcal B \in
  \BLO(H^{k}(\Gamma);H^{k}(0,1))$ for $k \geq 2$ and $k = 0$. The
  claim follows from this by interpolation; see, e.g., 
  \cite[Theorem 5.1 on p. 27 and Theorem 7.7 on p. 36]{L-M:NHBVPAPI}. 
\end{proof}

\begin{proposition} \label{AOperatorProp}
Let $\Omega$ be as in Proposition \ref{DirichletTraceProp}.  For $f
\in L^2(\Omega)$, define the linear mapping $\mathcal A$ by 
\begin{equation*}
  (\mathcal Af )(s) := \frac{1}{A(s)} {\int_{ \Gamma(s) }{\phi d
      A}}\quad\text{for}\quad s\in (0,1).  
\end{equation*}
Then
\begin{equation*}
  \mathcal A \in 
  \BLO(H^k(\Omega); H^k(0,1)) \quad\text{for}\quad k\geq 0.
\end{equation*}

\end{proposition}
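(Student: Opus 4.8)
The plan is to follow essentially the same scheme that worked for $\mathcal B$ in Proposition~\ref{BOperatorProp}: establish boundedness for two integer exponents ($k=0$ and one sufficiently large value, here $k=2$), then fill in all intermediate and positive real $k$ by interpolation. First I would treat $k=0$ directly. For $f \in L^2(\Omega)$, Fubini in tube coordinates (using the volume differential \eqref{VolumeDifferential}, so $dV = \frac{r}{\Xi}\,ds\,dr\,d\theta$, and $dA = r\,dr\,d\theta$ on $\Gamma(s)$) gives $(\mathcal A f)(s) = \frac{1}{A(s)}\int_{\Gamma(s)} f\,dA$ for a.e.\ $s$, and by Cauchy--Schwarz on each slice $\abs{(\mathcal A f)(s)}^2 \le \frac{1}{A(s)}\int_{\Gamma(s)}\abs{f}^2\,dA$. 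Integrating in $s$, and comparing $dA\,ds$ with $dV$ via the bounded factor $\Xi$ (which satisfies $1 \le \Xi \le (1-\eta)^{-1}$ by \eqref{NoFolding}--\eqref{CurvatureFactor}), together with the lower bound $A(s) \ge \pi (\min R)^2 > 0$, yields $\norm{\mathcal A f}_{L^2(0,1)} \le C\norm{f}_{L^2(\Omega)}$. This is the easy endpoint.

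For the high-regularity endpoint I would work with $k=2$ (any integer $k\ge 2$ would do, but $k=2$ suffices for interpolation down to all $0\le k\le 2$, and larger $k$ are handled the same way). The idea is to differentiate under the integral sign: for smooth $f$, $\frac{d}{ds}\int_{\Gamma(s)} f\,dA = \int_{\Gamma(s)}\frac{\partial f}{\partial s}\,dA + R(s)R'(s)\int_0^{2\pi} f(s,R(s),\theta)\,d\theta$, exactly the Leibniz-type identity already derived in Section~\ref{WebsterSec}. Iterating this twice expresses $(\mathcal A f)''$ as a finite sum of terms of two types: (a) integrals over $\Gamma(s)$ of $f$ and its $s$-derivatives up to order $2$, multiplied by smooth bounded coefficients built from $A, A', A'', R, R', R''$ and the lower bound on $A$; and (b) boundary terms of the form $\int_0^{2\pi}(\partial_s^j f)(s,R(s),\theta)\,d\theta$ for $j\le 1$, i.e.\ $2\pi\,\mathcal B(\partial_s^j f\rst{\Gamma})$. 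The type-(a) terms are controlled in $L^2(0,1)$ by the $k=0$ estimate applied to $\partial_s^j f \in H^{2-j}(\Omega) \hookrightarrow L^2(\Omega)$. For the type-(b) terms I would invoke Proposition~\ref{BOperatorProp} together with the trace estimate: restriction to $\Gamma$ is bounded $H^m(\Omega)\to H^{m-1/2}(\Gamma)$, so $\partial_s^j f\rst{\Gamma} \in H^{2-j-1/2}(\Gamma)$, hence $\mathcal B$ of it lies in $H^{3/2-j}(0,1) \hookrightarrow L^2(0,1)$ for $j\le 1$. Combining, $\norm{\mathcal A f}_{H^2(0,1)} \le C\norm{f}_{H^2(\Omega)}$ for smooth $f$, and by density of $C^\infty(\overline\Omega)$ in $H^2(\Omega)$ this extends to all $f\in H^2(\Omega)$.

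Finally, having $\mathcal A$ bounded $H^0(\Omega)\to H^0(0,1)$ and $H^2(\Omega)\to H^2(0,1)$, interpolation of Sobolev spaces (\cite[Theorem 5.1 on p.~27 and Theorem 7.7 on p.~36]{L-M:NHBVPAPI}, as already used for $\mathcal B$) gives $\mathcal A \in \BLO(H^k(\Omega);H^k(0,1))$ for $0\le k\le 2$; repeating the differentiation argument with more derivatives handles $k>2$. I expect the main obstacle to be the bookkeeping in the $k=2$ step: one must verify that every coefficient appearing after two applications of the Leibniz rule is genuinely smooth and bounded on $[0,1]$ (this uses smoothness of $R$ and the standing assumption $\min R>0$, hence $\min A>0$), and one must correctly track that the boundary terms only ever involve $s$-derivatives of order strictly less than $k$, so that the half-derivative loss in the trace theorem is compensated by the gain from Proposition~\ref{BOperatorProp} landing in $H^{k-1/2}(0,1)\hookrightarrow H^{?}$. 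No single estimate is deep; the care is in organizing the induction on the number of derivatives so the interpolation endpoints are clean.
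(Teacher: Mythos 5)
Your proposal is correct and follows essentially the same route as the paper: an $L^2$ slice estimate for $k=0$, the Leibniz-type identity (which is exactly the intertwining relation \eqref{ABIntertwiningRelation} combining $\mathcal A$, $\mathcal B$, and the trace on $\Gamma$) for the higher endpoint, and then Sobolev interpolation. The only difference is that the paper stops the differentiation at $k=1$, which is slightly lighter since it needs only the $H^{1/2}(\Gamma)$ trace of $H^1(\Omega)$ functions rather than the higher-order traces your $k=2$ endpoint requires.
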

\begin{proof}
 For $\phi \in L^2(\Omega)$ we have by H\"older's inequality
  \begin{align*}
    \norm{\mathcal A \phi}^2_{L^2(0,1)} & = \int_{0}^1{\left (
      \frac{1}{A(s)} {\int_{ \Gamma(s) }{\phi d A}} \right )^2 \, ds}
    \\ & \leq \int_{0}^1{ \frac{1}{A(s)^2} \left (
      \int_{\Gamma(s)}{\phi^2 d A} \right ) \left ( \int_{\Gamma(s)}{
        d A} \right ) \, ds} \\ & \leq \int_{0}^1\int_{\Gamma(s)}{
      \frac{\Xi(s)}{A(s)} \phi^2 \cdot \frac{r dr d \theta ds}{\Xi(s)}
    } \leq C_1 \norm{\phi}^2_{L^2(\Omega)}
  \end{align*}
where $C_1 := \max_{s \in [0,1]} {\frac{\Xi(s)}{A(s)}}$
  because $dA = r dr d \theta$ on $\Gamma(s)$ and $dV$ is given by
  \eqref{VolumeDifferential}. Thus $\mathcal A \in \BLO(L^2(\Omega); L^2(0,1))$.

  Using the operators $\mathcal A$ and $\mathcal B$, equation
  \eqref{SpaceDerivativeLimit} takes the form
  \begin{equation} \label{ABIntertwiningRelation}
    \frac{\partial}{ \partial s} \left ( \mathcal A \phi \right ) =
    \mathcal A \left ( \frac{\partial \phi}{ \partial s} \right ) +
    \frac{A'(s)}{A(s)} \left ( \mathcal B (\phi\rst{\Gamma}) -
    \mathcal A (\phi) \right )
  \end{equation}
  first for $\phi \in C^\infty(\overline{\Omega})$, and then by
  density for all $\phi \in H^1(\Omega)$ and $\phi\rst{\Gamma} \in
  H^{1/2}(\Gamma)$ and \cite[Theorem 1.5.1.3]{PG:EPNSD}.  It follows
  that $\norm{\frac{\partial}{ \partial s} \left ( \mathcal A \phi
    \right )}_{L^2(0,1)} \leq C_1 \norm{\bt \cdot \nabla
    \phi}_{L^2(0,1)} + \sup_{s \in [0,1]}{\frac{A'(s)}{A(s)}} \left (
    C_2 \norm{\phi\rst{\Gamma}}_{L^2(\Gamma)} + C_1
    \norm{\phi}_{L^2(0,1)} \right ) \leq C \norm{\phi}_{H^1(\Omega)}$
  for all $\phi \in H^1(\Omega)$ for some $C < \infty$. This means
  that $\mathcal A \in \BLO(H^1(\Omega);H^1(0,1))$, and the claim
  follows by interpolation, as in the previous proposition.
\end{proof}

We treat some of the limits as $s'\to s$ in Section \ref{WebsterSec}
by Proposition¨\ref{AverageConvProp}. In the proof, we use the
centered Hardy--Littlewood maximal operator, defined for functions
$f\in L^1_{\mathrm{loc}}(\R)$ by setting
\begin{equation*}
  \left ( Mf \right )(x)=\sup_{h>0}\frac{1}{2h}\int_{x-h}^{x+h}\abs{f}\,dx.
\end{equation*}
It is well-known that the non-linear operator $M$ is bounded from
$L^p(\R)$ to $L^p(\R)$ for $1<p\leq \infty$; see, e.g.,
\cite[Theorem~2.5 on p.~31]{JD:FA} or \cite[Theorem~8.18; in
  particular Eq.~(6) on p.~174]{WR:RCA}.
 
\begin{proposition}\label{AverageConvProp}
  Let $\tilde{f}\in L^2(\Omega)$, $\tilde{g}\in L^2(\Gamma)$,
  and define for any $h>0$  the functions
  \begin{align*}
    f(s)=&\int_{\Gamma(s)} \tilde{f}\,dA=A(s) \left (\mathcal A \tilde{f}  \right )(s) \quad \text{ for } \quad  s\in [0,1],\\
    f_h(s)=&
     \begin{cases}
      \frac{1}{h}\int_{\Omega(s+h,s)} \Xi \tilde{f}\, dV & \quad \text{ for } \quad s\in [h,1-h]; \\
      0 & \quad \text{otherwise;}
    \end{cases}\\
    g(s)=&\int_{0}^{2\pi}\tilde{g}(s,R(s),\theta)\,d\theta
    =2\pi(\mathcal B\tilde{g})(s),\quad s\in [0,1], \\
    g_h(s)=&
    \begin{cases}
      \frac{1}{h}\int_{\Gamma(s+h,s)} W^{-1} \tilde{g}\, dS & \quad \text{ for } \quad s\in [h,1-h];\\
      0 & \quad \text{otherwise.}
    \end{cases}
  \end{align*}
  Then  
$f_h\to f$ and $ g_h\to g$ pointwise Lebesgue a.e. as well as in
  $L^2(0,1)$ as $h\to 0$.
\end{proposition}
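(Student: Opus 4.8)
The plan is to reduce the proposition to the one-dimensional fact that forward averaging operators converge, both pointwise a.e.\ and in $L^2$-norm, on $L^2(0,1)$; the geometry of $\Omega$ enters only through a change of variables.

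First I would unwind the definitions. Using the volume element $dV = \frac{r}{\Xi}\,ds\,dr\,d\theta$ of \eqref{VolumeDifferential}, Fubini's theorem shows that the weight $\Xi$ cancels the Jacobian factor $\Xi^{-1}$, giving for $s\in[h,1-h]$
\[
 f_h(s) = \frac1h\int_{\Omega(s,s+h)}\Xi\,\tilde f\,dV
 = \frac1h\int_s^{s+h}\Big(\int_{\Gamma(\sigma)}\tilde f\,dA\Big)\,d\sigma
 = \frac1h\int_s^{s+h} f(\sigma)\,d\sigma ,
\]
and likewise, using $dS = W\,d\theta\,ds$ from \eqref{SurfaceAreaElement}, the weight $W^{-1}$ cancels $W$ and
\[
 g_h(s) = \frac1h\int_{\Gamma(s,s+h)}W^{-1}\tilde g\,dS
 = \frac1h\int_s^{s+h}\Big(\int_0^{2\pi}\tilde g(\sigma,R(\sigma),\theta)\,d\theta\Big)\,d\sigma
 = \frac1h\int_s^{s+h} g(\sigma)\,d\sigma .
\]
Thus $f_h$ and $g_h$ are just the forward averages of the one-dimensional functions $f$ and $g$, set equal to $0$ on the shrinking endpoint set $(0,h)\cup(1-h,1)$.

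Next I would record that $f,g\in L^2(0,1)$. Indeed $f = A\cdot(\mathcal A\tilde f)$ lies in $L^2(0,1)$ by Proposition~\ref{AOperatorProp} and the boundedness of the smooth function $A$ on $[0,1]$, while $g\in L^2(0,1)$ follows from Cauchy--Schwarz in $\theta$ and the two-sided bound $0<\inf_{[0,1]}W\le W\le\sup_{[0,1]}W<\infty$ (or, alternatively, from Proposition~\ref{BOperatorProp} at $k=0$). Extending $f$ and $g$ by zero to $\R$, they lie in $L^2(\R)\subset L^1_{\mathrm{loc}}(\R)$. The Lebesgue differentiation theorem then gives, for a.e.\ $s\in(0,1)$, that $\frac1h\int_s^{s+h}|f(\sigma)-f(s)|\,d\sigma\le \frac1h\int_{s-h}^{s+h}|f(\sigma)-f(s)|\,d\sigma\to 0$ as $h\to 0^+$, and hence $f_h(s)\to f(s)$ (recall $s\in[h,1-h]$ for $h$ small); the same argument gives $g_h(s)\to g(s)$ a.e. For the $L^2$ convergence I would dominate: $|f_h(s)|\le \frac1h\int_{s-h}^{s+h}|f|\le 2(Mf)(s)$ for $s\in[h,1-h]$ and $f_h(s)=0$ otherwise, so $|f_h-f|\le 2Mf+|f|$ pointwise for every $h>0$; since $M$ is bounded on $L^2(\R)$ this dominant is an $h$-independent $L^2$ function, and the dominated convergence theorem combined with the a.e.\ convergence yields $f_h\to f$ in $L^2(0,1)$, and similarly $g_h\to g$.

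I do not expect a genuine obstacle: the proposition is a packaging of the Lebesgue differentiation theorem (for the pointwise statement) and the Hardy--Littlewood maximal inequality (for the $L^2$ statement). The only points that need care are the slightly fiddly change-of-variables bookkeeping in the first step --- one must check that the weights $\Xi$ and $W^{-1}$ built into the definitions of $f_h$ and $g_h$ are precisely those needed to recover $\int_{\Gamma(\sigma)}\tilde f\,dA$ and $\int_0^{2\pi}\tilde g\,d\theta$ --- and the verification that $f$ and $g$ are square-integrable, which is what legitimizes applying the one-dimensional machinery.
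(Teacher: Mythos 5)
Your proposal is correct and follows essentially the same route as the paper: reduce $f_h$ (and $g_h$) to one-dimensional forward averages of $f$ (resp.\ $g$) via Fubini and the cancellation of the Jacobian weights, then obtain pointwise a.e.\ convergence from the Lebesgue differentiation theorem and $L^2$ convergence by dominating with the Hardy--Littlewood maximal function and applying dominated convergence. The only cosmetic difference is that you carry out the $g_h$ case explicitly, whereas the paper dismisses it as identical, and your dominating function $2Mf+|f|$ is the same in substance as the paper's bound $|f_h-f|^2\le 9(Mf)^2$.
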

\begin{proof}
  We prove only the claim concerning $f_h$ as the case of $g_h$ is
  essentially identical. That $f\in L^2(0,1)$ follows from
  Proposition~\ref{AOperatorProp}.  For $h\leq s\leq 1-h$
  \begin{equation*}
    f_h(s)=\frac{1}{h}\int_{s}^{s+h}\int_{\Gamma(\sigma)} \tilde{f}\,dA\,d\sigma
=\frac{1}{h}\int_{s}^{s+h}\int_{\Gamma(\sigma)} \Xi \tilde{f} \cdot \Xi^{-1}dA\,d\sigma
    =\frac{1}{h}\int_{s}^{s+h} f(\sigma)\,d\sigma
  \end{equation*}
  by Fubini's theorem, recalling that $dV = \Xi^{-1} dA \, ds$ by
  \eqref{VolumeDifferential}.
By Lebesgue's theorem (see, e.g., \cite[Corollary~2.13]{JD:FA} or
\cite[Theorems~7.7 and 7.10]{WR:RCA}), we have $\abs{f(s)} \leq
(Mf)(s)$ a.e. $s \in [0,1]$, and 
  \begin{equation*}
    \abs{ f_h(s)}\leq \sup_{h>0} \abs{f_h(s)}
    \leq 2 \cdot \sup_{h>0}\frac{1}{2h}
    \int_{s-h}^{s+h}\abs{f(\sigma)}\,d\sigma= 2 \left ( Mf \right )(s),
  \end{equation*}
 Thus, 
\begin{equation} \label{AverageConvPropEq1}
  \abs{f_h(s) - f(s)}^2 \leq 9  \left ( Mf \right )(s)^2 \text{ for a.e. } s \in [0,1]
\end{equation}
where the upper bound is in $L^1(0,1)$ since the Hardy-Littlewood
maximal operator $M$ maps $L^2(0,1)$ into itself.  By the Lebesgue's
theorem, the left hand side of \eqref{AverageConvPropEq1} converges to
zero pointwise a.e. on $[0,1]$. Hence, $f_h \to f$ in $L^2(0,1)$ by
the Lebesgue Dominated Convergence theorem
\cite[Theorem~1.34]{WR:RCA}.
\end{proof}

We are now ready to give a rigorous proof for Theorem
\ref{MainTheorem1}.

\begin{proof}[Proof of Theorem \ref{MainTheorem1}]
  Claim \eqref{MainTheorem1Claim1}: By
  \eqref{WaveEqRegularityAssumptions} we have $\phi \in
  C^2(\rpluscl;L^2(\Omega))$ and $ \frac{\partial \phi}{\partial s}\in
  C^1(\rpluscl;L^2(\Omega)) $. Thus an application of Propositions
  \ref{AOperatorProp} and \ref{BOperatorProp} implies
  that $\bar \phi \in C^2(\rpluscl;L^2(0,1))$ and
  $\frac{\partial \bar \phi}{ \partial s} \in C^1(\rpluscl;L^2(0,1))$,
  as desired.

  Claim \eqref{MainTheorem1Claim2}: The functions $A(s), A(s)^{-1},
  \eta(s), W(s), \Xi^{-1} \nabla (\Xi^{-1})$, and $E$ are all smooth
  by assumptions. Hence, by inspection of formulae
  \eqref{ControlTermsF} -- \eqref{ControlTermsH} we need to show, in
  addition to claim \eqref{MainTheorem1Claim1} of this theorem, that
  $\frac{\partial }{\partial s} \left ( \mathcal B \phi\rst{\Gamma}
  \right ) \in C(\rpluscl;L^2(0,1))$ in order to prove that $F \in
  C(\rpluscl;L^2(0,1))$ and $G, H \in C^1(\rpluscl;L^2(0,1))$.  This
  follows directly from Propositions \ref{DirichletTraceProp} and
  \ref{BOperatorProp}.

  Claim \eqref{MainTheorem1Claim3}: 

  Let $0<h<1$. For $h<s<1-h$ we may write, in the notation of
  Section~\ref{WebsterSec},
  \begin{equation}\label{MainTheorem1Proof1}
    L(s,s+h)=\int_{\Omega(s+h,s)}\nabla
    \left(\frac{1}{\Xi}\right)\cdot \nabla \phi dV,
  \end{equation}
  where 
  \begin{align*}
    L(s,s+h)=&\int_{\Gamma(s+h)} \frac{\partial \phi}{\partial
      s}dA-\int_{\Gamma(s)}
    \frac{\partial \phi}{\partial s}dA\\
    &-\alpha\int_{\Gamma(s+h,s)}\frac{1}{\Xi}\frac{\partial
      \phi}{\partial
      t}dA-\int_s^{s+h}\int_{\Gamma(\sigma)}\frac{1}{c^2\Xi^2}
    \frac{\partial^2  \phi}{\partial t}dAd\sigma.
  \end{align*}
  This makes sense for solutions of the wave equation satisfying
  \eqref{WaveEqRegularityAssumptions}: the wave equation holds
  pointwise almost everywhere, and Green's formula
  \cite[Theorem~A.3]{A-L-M:AWGIDDS} applies as well.

  We want to derive the weak form of Webster's equation. To this end,
  take an arbitrary test function $\zeta\in C^\infty_0((0,1)\times
  (0,T))$, and choose $h$ sufficiently small, so that the spatial
  support of $\zeta$ is contained in the interval
  $(3h,1-3h)$.  By a change of variables, we see that
  \begin{equation}\label{MainTheorem1Proof2}
    \int_0^1\int_{\Gamma(s+h)}\frac{\partial \phi}{\partial
      s}dA \, \zeta(s,t)ds=\int_0^1\int_{\Gamma(s)}\frac{\partial \phi}{\partial
      s}dA \, \zeta(s-h)ds.
  \end{equation}
  Next we multiply \eqref{MainTheorem1Proof1} by $\zeta$, integrate
  over $s$, divide by $h$ and use \eqref{MainTheorem1Proof2}. This
  leads to
  \begin{equation}
    \label{MainTheorem1Proof3}
    \begin{aligned}
      \overbrace{\int_0^1\frac{1}{h} \int_{\Omega(s+h,s)}\nabla
      \left(\frac{1}{\Xi}\right)\cdot \nabla \phi \zeta(s,t)dV\,ds}^{I_h}
    &=\overbrace{\int_0^1\int_{\Gamma(s)}\frac{\partial\phi}{\partial
        s}dA \frac{\zeta(s-h,t)-\zeta(s,t)}{h}ds}^{II_h}\\
      &
      -\alpha\underbrace{\int_0^1\frac{1}{h}\int_{\Gamma(s+h,s)}\frac{1}{\Xi}
        \frac{\partial\phi}{\partial t}dS \, \zeta(s,t)ds}_{III_h}\\
      &-
      \underbrace{\int_0^1\frac{1}{h}\int_s^{s+h}
        \int_{\Gamma(\sigma)}\frac{1}{c^2\Xi^2}\frac{\partial
        ^2\phi}{\partial t^2}dA \, d\sigma \, \zeta(s,t)ds}_{IV_h}.
    \end{aligned}
  \end{equation}
    We next take the limit of all the terms in
  \eqref{MainTheorem1Proof3} as $h$ tends to zero. For the term $I_h$,
  we use \eqref{MainTheorem1Proof1} and Proposition
  \ref{AverageConvProp} with $\tilde{f} :=\Xi^{-1} \nabla \Xi^{-1} \cdot\nabla
  \phi $ to get
  \begin{equation*}
    I:=\lim_{h\to 0}I_h=\int_0^1\int_{\Gamma(s)}
    { \frac{1}{\Xi} \nabla \left(\frac{1}{\Xi}\right)\cdot\nabla \phi\,dA\,\zeta(s,t)ds }
= \int_{\Omega} {\left [ \nabla \left(\frac{1}{\Xi}\right)\cdot\nabla \phi \right ] \,\zeta  \,dV}.
  \end{equation*}
  The limit of the term $II_h$ on the right is
  \begin{align*}
    II:=&\lim_{h\to 0} II_h= \lim_{h\to 0}
    \int_0^1\int_{\Gamma(s)}\frac{\partial\phi}{\partial
      s}dA \frac{\zeta(s-h,t)-\zeta(s,t)}{h}ds\\ =&-\int_0^1  \left ( \int_{\Gamma(s)}
    \frac{\partial\phi}{\partial s}dA \right ) \frac{\partial\zeta(s,t)}{\partial
      s}ds
  \end{align*}
  since we have a difference quotient on the smooth test function
  $\zeta$.  We handle the other two terms on the right by an
  application of Proposition~\ref{AverageConvProp} with $g =
  \frac{W}{\Xi} \frac{\partial\phi}{\partial t}\rst\Gamma$ to obtain
  \begin{align*}
    III :=&\lim_{h\to 0}III_h=
    \lim_{h\to 0}\int_0^1\frac{1}{h}\int_{\Gamma(s+h,s)}\frac{1}{\Xi}\frac{\partial  \phi}{\partial t}\,dS \,\zeta(s,t)ds\\ 
    = &\int_0^1 \int_{0}^{2\pi} W \left ( \frac{1}{\Xi} \frac{\partial\phi}{\partial t} \right )\rst\Gamma\,d\theta\,\zeta(s,t)ds; \text{ and }\\
    IV:=&\lim_{h\to 0} IV_h=\lim_{h\to 0}\int_0^1\int_s^{s+h}\int_{\Gamma(\sigma)}\frac{1}{c^2\Xi^2}
    \frac{\partial^2  \phi}{\partial t}\,dA\,d\sigma \, \zeta(s,t)\,ds\\=&
    \int_0^1 \int_{\Gamma(s)}\frac{1}{c^2\Xi^2}
    \frac{\partial^2  \phi}{\partial t}\,dA \, \zeta(s,t)\,ds
  \end{align*}
  because $dS = W d \theta ds$ on $\Gamma$ by
  \eqref{SurfaceAreaElement}.

  The proof is completed by expressing the limiting terms $II$, $III$,
  and $IV$ above in terms of $\bar\phi$. For term $II$, we use
  \eqref{ABIntertwiningRelation} to handle the derivative with respect
  to $s$, and get
  \begin{align*}
    II=&-\int_0^1A(s)\frac{\partial \bar\phi}{\partial s}(s)
    \frac{\partial\zeta}{\partial s}(s,t)ds\\
    &+\int_0^1\frac{1}{A(s)}\frac{\partial}{\partial s}
    \left(A'(s)(\mathcal A\phi(s)-\mathcal B(\phi\rst\Gamma))\right)
    \zeta(s,t)A(s)\,ds.
  \end{align*}
  after integrating by parts in the second term as well. This yields the
  forcing term $F(s,t)$.

 For term $III$, we note that passing from
  \eqref{DissipationTermEq1} to \eqref{DissipationTermEq} requires
  only adding and substracting suitable terms. The same is true for
  passing from \eqref{TimeDerivativeLimit} to
  \eqref{TimeDerivativeLimitCont}, and this takes care of the term
  $IV$. Hence we get
  \begin{align*}
    III=&2\pi\int_0^1\left( W\frac{\partial\bar\phi}{\partial t}- W
      \frac{\partial}{\partial t}\left(\bar\phi-\mathcal B(\phi\rst\Gamma)
      \right)
      -W\eta \frac{\partial}{\partial t}\mathcal B(\phi\rst\Gamma\cos\theta) 
    \right)\zeta(s,t)\,ds\\
    IV=&\int_0^1A(s)\left(\frac{1}{c(s)^2}
      \frac{\partial^2\bar\phi}{\partial t^2}+\frac{1}{A(s)}
      \int_{\Gamma(s)}\frac{E}{c^2}\frac{\partial^2\phi }{\partial t^2}\,dA\right)
    \zeta(s,t)\,ds.
  \end{align*}
  
  To finish off, we recall that passing to the limit $h\to 0$ in
  \eqref{MainTheorem1Proof3} gives the equation
  \begin{equation*}
    I=II-\alpha III-IV.
  \end{equation*}
  We insert all the above results into this equation, and get the weak
  form of \eqref{WebsterEq1} as desired.

  Claim \eqref{MainTheorem1Claim4}: As argued in Section
  \ref{WebsterSec}, we have \eqref{WaveEquationInputBCResultSec}
  $\Rightarrow$ \eqref{WebsterEquationBoundaryCondition} for smooth
  $\phi$ if $\kappa(0) = R'(0) = 0$. Since $\frac{\partial
    \phi}{\partial \nu}(\cdot, t) \in L^2(\Gamma(0))$ and
  $\frac{\partial \phi}{\partial t}\rst{\Gamma(0)}(\cdot, t) \in
  H^{1/2}(\Gamma(0))$ by \eqref{WaveEqRegularityAssumptions}, and the
  claim follows because the averaging over $\Gamma(0)$ is bounded
  linear functional on $L^2(\Gamma(0))$.
\end{proof}


\appendix


\section{Global coordinates for tubular domains}

\subsection*{Surface area element}

By \eqref{OmegaDef}, the wall $\Gamma$ can be parametrised as
\begin{equation*}
  \br(s,\theta)=\gamma(s)+R(s)(\bn\cos\theta+\bb\sin\theta)
\end{equation*}
where $s\in [0,1]$ and $\theta\in[0,2\pi]$. Hence the surface area
element can be found by using the formula
\begin{equation*}
  dS = \abs{\tfrac{\partial \br}{\partial
    s}\times \tfrac{\partial \br}{\partial \theta}}\, ds \, d\theta.
\end{equation*}

We have $\frac{\partial \br}{\partial \theta} =
R(s)(-\bn(s)\sin\theta+\bb(s)\cos\theta)$, and by
\eqref{FrenetDerivatives} we get
\begin{equation*}
 \frac{\partial \br}{\partial s} = 
 \bt(1-\kappa R)+\bn(R'\cos\theta-R\tau\sin\theta)
 +\bb(R'\sin\theta+R\tau\cos\theta).
\end{equation*}
Since $\{ \bt, \bn, \bb \}$ is a right-hand orthogonal system, we see
that
\begin{align*}
  \frac{\partial \br}{\partial s} \times\bn 
  = \bb(1-\kappa R)-\bt(R'\sin\theta+R\tau\cos\theta) 
\end{align*}
and
\begin{align*}
   \frac{\partial \br}{\partial s} \times\bb 
  =&-\bn(1-\kappa R)+\bt(R'\cos\theta-R\tau\sin\theta).
\end{align*}
Hence the exterior normal derivative lies in the direction of
\begin{equation} \label{UnnormalisedNormalVector}
  - \frac{\partial \br}{\partial s} \times \frac{\partial \br}{\partial \theta} 
  =  R[- \bt R' + \bn (1 - \kappa R) \cos\theta + \bb (1 - \kappa R) \sin\theta ]
\end{equation}
since $\kappa R < 1$.  Now \eqref{SurfaceAreaElement} follows by
orthonormality, see \eqref{NoFolding}.

\subsection*{Volume element}

To verify \eqref{VolumeDifferential} we need to compute the Jacobian
of the coordinate trasformation:
\begin{align*}
  & \frac{\partial (s, r, \theta)}{\partial (x, y, z)}
  = \left | \begin{matrix} 
      \frac{\partial s}{\partial x} &  \frac{\partial s}{\partial y} & \frac{\partial s}{\partial z} \\
      \frac{\partial r}{\partial x} &  \frac{\partial r}{\partial y} & \frac{\partial r}{\partial z} \\
      \frac{\partial \theta}{\partial x} &  \frac{\partial \theta}{\partial y} & \frac{\partial \theta}{\partial z} 
    \end{matrix}\right | \\
  & = 
  \left | 
    \begin{matrix} 
      \Xi (\bt \cdot \bi)  & \Xi (\bt \cdot \bj)    & \Xi (\bt \cdot \bk)   \\
      \left [ \cos{\theta} \, \bn + \sin{\theta} \, \bb\right ] \cdot  \bi   
      & \left [ \cos{\theta} \, \bn + \sin{\theta} \, \bb\right ] \cdot  \bj     
      & \left [ \cos{\theta} \, \bn + \sin{\theta} \, \bb\right ] \cdot  \bk     \\
      \left [ - \tau \Xi  \bt - \frac{\sin{\theta}}{r} \bn + \frac{\cos{\theta}}{r} \bb    \right ] \cdot \bi 
      & \left [ - \tau \Xi  \bt - \frac{\sin{\theta}}{r} \bn + \frac{\cos{\theta}}{r} \bb \right ] \cdot \bj 
      & \left [ - \tau \Xi  \bt -  \frac{\sin{\theta}}{r} \bn + \frac{\cos{\theta}}{r} \bb \right ] \cdot \bk
    \end{matrix}
  \right |  \\
  & = \Xi \cos{\theta} 
  \left | \begin{matrix} 
      \bt \cdot \bi  & \bt \cdot \bj  & \bt \cdot \bk   \\
      \bn  \cdot  \bi    &  \bn  \cdot  \bj      &  \bn  \cdot  \bk     \\
      \left [ - \tau \Xi  \bt -
        \frac{\sin{\theta}}{r} \bn + \frac{\cos{\theta}}{r} \bb
      \right ] \cdot \bi & \left [ - \tau \Xi  \bt -
        \frac{\sin{\theta}}{r} \bn + \frac{\cos{\theta}}{r} \bb
      \right ] \cdot \bj & \left [ - \tau \Xi  \bt -
        \frac{\sin{\theta}}{r} \bn + \frac{\cos{\theta}}{r} \bb
      \right ] \cdot \bk
    \end{matrix}
  \right |  
  \\ 
  & + \Xi \sin{\theta} \left | \begin{matrix} 
      \bt \cdot \bi  & \bt \cdot \bj  & \bt \cdot \bk   \\
      \bb \cdot  \bi   
      &    \bb \cdot  \bj     
      &    \bb \cdot  \bk     \\
      \left [ - \tau \Xi  \bt -
        \frac{\sin{\theta}}{r} \bn + \frac{\cos{\theta}}{r} \bb
      \right ] \cdot \bi & \left [ - \tau \Xi  \bt -
        \frac{\sin{\theta}}{r} \bn + \frac{\cos{\theta}}{r} \bb
      \right ] \cdot \bj & \left [ - \tau \Xi  \bt -
        \frac{\sin{\theta}}{r} \bn + \frac{\cos{\theta}}{r} \bb
      \right ] \cdot \bk
    \end{matrix}\right |  \\
  & = \frac{\Xi \cos^2{\theta}}{r} 
  \left | \begin{matrix} 
      \bt \cdot \bi  & \bt \cdot \bj  & \bt \cdot \bk   \\
      \bn  \cdot  \bi    &  \bn  \cdot  \bj      &  \bn  \cdot  \bk     \\
       \bb  \cdot \bi 
      &  \bb\cdot \bj 
      &  \bb  \cdot \bk
    \end{matrix}
  \right |  
   - \frac{\Xi \sin^2{\theta}}{r} \left | \begin{matrix} 
      \bt \cdot \bi  & \bt \cdot \bj  & \bt \cdot \bk   \\
      \bb \cdot  \bi & \bb \cdot  \bj &    \bb \cdot  \bk     \\
      \bn  \cdot \bi &  \bn   \cdot \bj &  \bn   \cdot \bk
    \end{matrix}\right |  
  = \frac{\Xi }{r} 
  \left | \begin{matrix} 
      \bt \cdot \bi  & \bt \cdot \bj  & \bt \cdot \bk   \\
      \bn  \cdot \bi &  \bn \cdot \bj & \bn \cdot \bk     \\
      \bb  \cdot \bi &  \bb \cdot \bj & \bb \cdot \bk
    \end{matrix}
  \right |.  
\end{align*}
We write the basis change as $x \bi + y \bj + z \bk = \alpha \bt +
\beta \bn + \gamma \bb $, and thus we see that
\begin{equation*}
  \left [ \begin{matrix} \alpha \\ \beta \\ \gamma  \end{matrix} \right ] = 
  A   \left [ \begin{matrix} x \\ y \\ z  \end{matrix} \right ]
\text{ where }
A := \left [ \begin{matrix} 
      \bt \cdot \bi  & \bt \cdot \bj  & \bt \cdot \bk   \\
      \bn  \cdot \bi &  \bn \cdot \bj & \bn \cdot \bk     \\
      \bb  \cdot \bi &  \bb \cdot \bj & \bb \cdot \bk
    \end{matrix}
  \right ].
\end{equation*}
Because both the bases $\{\bi , \bj , \bk \}$ and $\{\bt , \bn , \bb
\}$ are orthogonal, the basis change matrix $A$ is unitary. Hence its
determinant is of absolute value $1$, and $\abs{\frac{\partial (s, r,
    \theta)}{\partial (x, y, z)}} = \frac{\Xi}{r}$.  From this we
conclude that \eqref{VolumeDifferential} holds.

\subsection*{Gradient and normal derivative on $\Gamma$}

We have
\begin{equation*}
  \nabla = {\bf i}\frac{\partial }{\partial x} 
  + {\bf j} \frac{\partial }{\partial y} +
  {\bf k} \frac{\partial }{\partial z}.
\end{equation*}
Suppose that $\phi = \phi(s,r,\theta)$ where $s$, $r$, and $\theta$
are functions of $x, y$, and $z$.  We have by the chain rule $
\frac{\partial }{\partial x} = \frac{\partial s}{\partial x}
\frac{\partial }{\partial s} + \frac{\partial r}{\partial x}
\frac{\partial}{\partial r} + \frac{\partial \theta}{\partial x}
\frac{\partial}{\partial \theta}$, and hence we need to compute
$\frac{\partial s}{\partial x}$, $ \frac{\partial r}{\partial x}$, and
$\frac{\partial \theta}{\partial x}$.  By differentiating
\eqref{CoordinateIdentity} with respect to $x$, we get by using
\eqref{FrenetDerivatives}
\begin{align*} 
  \bi & = \left (1 - r \kappa \cos{\theta} \right ) \frac{\partial s}{\partial x}  \bt (s) \\
  & \left ( - r \tau \sin{\theta} \frac{\partial s}{\partial x} 
+ \cos{\theta} \frac{\partial r}{\partial x} - r
  \sin{\theta} \frac{\partial \theta}{\partial x} \right ) \bn (s) \\
  & + \left ( r \tau \cos{\theta} \frac{\partial s}{\partial x} 
+ \sin{\theta} \frac{\partial r}{\partial x} + r
  \cos{\theta} \frac{\partial \theta}{\partial x} \right ) \bb (s).
\end{align*}
Because $\bt (s)$, $\bn (s)$, and $\bb (s)$ are orthonormal, we get
\begin{equation*}
  \left [ \begin{matrix} 1 - r \kappa \cos{\theta} & 0 & 0 \\
      -r \tau \sin{\theta} & \cos{\theta} & -\sin{\theta} \\
      r \tau \cos{\theta} & \sin{\theta} & \cos{\theta} \\
    \end{matrix} \right ]
  \left [ \begin{matrix} 
      \frac{\partial s}{\partial x} \\ \frac{\partial r}{\partial x} 
      \\ r\frac{\partial \theta}{\partial x}  
    \end{matrix} \right ] = 
  \left [ \begin{matrix} \bt(s) \\  \bn(s) 
      \\  \bb(s) \end{matrix} \right ]   \cdot  \bi. 
\end{equation*}
By the topmost row, $\frac{\partial s}{\partial x} = \frac{\bt(s)}{1 -
  r \kappa \cos{\theta}} \cdot \bi$, and using this gives
\begin{equation*}
  \left [ \begin{matrix} 
      \frac{\partial r}{\partial x} 
      \\ r \frac{\partial \theta}{\partial x}  
    \end{matrix} \right ] =
  \left [ \begin{matrix} 
      \cos{\theta} & \sin{\theta} \\
      -\sin{\theta} & \cos{\theta} \\
    \end{matrix} \right ]
  \left [ \begin{matrix} 
      \bn(s) + \frac{r \tau \sin{\theta}}{1 - r \kappa \cos{\theta}} \bt(s) \\ 
      \bb(s) - \frac{r \tau \cos{\theta}}{1 - r \kappa \cos{\theta}} \bt(s) 
    \end{matrix} \right ]  \cdot  \bi 
\end{equation*}
or
\begin{equation*}
  \begin{cases}
    \frac{\partial s}{\partial x} & = \frac{\bt(s)}{1 - r \kappa \cos{\theta}} \cdot \bi \\
    \frac{\partial r}{\partial x} &
    = \left [ \cos{\theta} \, \bn(s) + \sin{\theta} \, \bb(s)\right ] \cdot  \bi  \\
    \frac{\partial \theta}{\partial x} & 
     = \left [ - \frac{\tau}{1 - r \kappa \cos{\theta}} \bt(s) -
      \frac{\sin{\theta}}{r} \bn(s) + \frac{\cos{\theta}}{r} \bb(s)
    \right ] \cdot \bi
  \end{cases}
\end{equation*}
We now conclude that $\frac{\partial \phi}{\partial x} = F \cdot \bi$ where   
\begin{align*} 
  F(s,r,\theta) &  := 
  \bt(s) \left (  \frac{\frac{\partial \phi}{\partial s} 
      - \tau \frac{\partial \phi}{\partial \theta} } {1 - r \kappa \cos{\theta}}  \right )
\\ &   + \bn(s) \left (\cos{\theta} \frac{\partial \phi}{\partial r}   -  
    \frac{\sin{\theta}}{r}\frac{\partial \phi}{\partial \theta}\right )
+ \bb(s) \left ( \sin{\theta} \frac{\partial \phi}{\partial r}   +  
    \frac{\cos{\theta}}{r}\frac{\partial \phi}{\partial \theta} \right ). \nonumber
\end{align*}
A similar argument shows that also $\frac{\partial \phi}{\partial y} =
F \cdot \bj$ and $\frac{\partial \phi}{\partial z} = F \cdot \bk$
hold, and hence $\nabla \phi = F$. The formulas \eqref{NablaInTubeEq}
and \eqref{DiffOps} can be read from this.

It remains to compute the normal derivative $\frac{\partial
}{ \partial \nu}$ on the tube wall $\Gamma$.  By
\eqref{UnnormalisedNormalVector}, \eqref{SurfaceAreaElement}, and
\eqref{NablaInTubeEq} we get
\begin{align*}
   \bnu \cdot \nabla 
  & = \frac{ R}{W} \left (- \bt R' + (1 - \kappa R) (\bn \cos\theta +
    \bb \sin\theta ) \right )
  \cdot (\bt D_1 + \bn D_2 + \bb  D_3 ) \\
  & = \frac{R}{W} \left (- R' \Xi \left ( \frac{\partial}{\partial s}
      - \tau \frac{\partial}{\partial \theta} \right ) + (1 - \kappa
    R)\frac{\partial}{\partial r} \right ),
\end{align*}
as desired.

\end{document}